\documentclass[11pt]{article}

\usepackage{tikz}
\usepackage{subfigure}
\usepackage[english]{babel}
\usepackage[ruled,algosection,linesnumbered]{algorithm2e}
\usepackage[center]{caption2}
\usepackage{amsfonts,amssymb,amsmath,latexsym,amsthm}
\usepackage{multirow}
\usepackage[usenames,dvipsnames]{pstricks}
\usepackage{epsfig}
\usepackage{pst-grad} 
\usepackage{pst-plot} 
\usepackage[space]{grffile} 
\usepackage{etoolbox} 
\usepackage{indentfirst}
\makeatletter 
\patchcmd\Gread@eps{\@inputcheck#1 }{\@inputcheck"#1"\relax}{}{}

\topmargin  = -0.4 in \oddsidemargin = 0.25 in
\setlength{\textheight}{8.5in} \setlength{\textwidth}{6in}
\setlength{\unitlength}{1.0 mm}

\newtheorem{thm}{Theorem}[section]
\newtheorem{cor}[thm]{Corollary}
\newtheorem{lem}[thm]{Lemma}

\newtheorem{claim}{Claim}[section]

\def\ex{\mbox{ex}}
\def\sat{\mbox{sat}}

\UseRawInputEncoding

\begin{document}

\title{Minimizing the number of matchings of fixed size in a $K_s$-saturated graph
\thanks{This work was supported by the National Natural Science Foundation of China (No. 12071453), the National Key R and D Program of China (2020YFA0713100),  Anhui Initiative in Quantum Information Technologies (AHY150200), and the Innovation Program for Quantum Science and Technology (2021ZD0302904).}
}
\author{Jiejing Feng$^a$, \quad Doudou Hei$^{b}$,\quad Xinmin Hou$^{b,c}$\\
\small $^{a}$School of Data Science\\
\small University of Science and Technology of China,  Hefei, Anhui 230026, China\\
\small $^{b}$School of Mathematical Sciences\\
\small University of Science and Technology of China,  Hefei, Anhui 230026, China\\
\small $^c$ CAS Key Laboratory of Wu Wen-Tsun Mathematics\\
\small University of Science and Technology of China, Hefei, Anhui, 230026, PR China\\
{Emails: $^{a,b}$fengjj, heidd@mail.ustc.edu.cn,\,\,\,\, $^c$ xmhou@ustc.edu.cn}
}

\date{}

\maketitle

\begin{abstract}
  For a fixed graph $F$,  a graph $G$ is said to be $F$-saturated if $G$ does not contain a subgraph isomorphic to $F$ but does contain $F$ after the addition of any new edge. 
  Let $M_k$ be a matching consisting of $k$ edges and $S_{n,k}$ be the join graph of a complete graph $K_k$ and an empty graph $\overline{K_{n-k}}$. In this paper, we prove that for $s \geq3$ and $k\geq 2$, $S_{n,s-2}$ contains the minimum number of $M_k$ among all $n$-vertex $K_s$-saturated graphs for sufficiently large $n$, and when $k \leq s-2$, it is the unique extremal graph. In addition, we also show that $S_{n,1}$ is the unique extremal graph when $k=2$ and $s=3$. 
  
{\bf Keywords:} saturation number, matching, extremal graph  
\end{abstract}

\section{Introduction}
Given graphs $F$ and $G$, if $G$ does not contain $F$ as a subgraph, we say that $G$ is $F$-free. We write $\ex(n,F)$ for the Tur\'{a}n number of $F$, which is  the maximum number of edges in an $F$-free graph on $n$ vertices. This function which is considered to be a cornerstone in graph theory has been studied by many researchers (see, e.g., nice surveys such as \cite{1995What} and \cite{Z2013The}). 
  A graph $G$ is said to be $F$-saturated if $G$ does not contain $F$ as a subgraph but adding any missing edge to $G$ creates a copy of $F$. 
Given graphs $G$ and $H$, write $N(H, G)$ for the number of copies of $H$ in $G$, for a subset $S\subseteq V(G)$, write $N_G(H, S)$ for the number of copies of $H$ containing $S$ in $G$.
The generalized Tur\'an number is defined as 
$$\ex(n, H, F)=\max\{N(H, G) : G \text{ is an $n$-vertex $F$-saturated graph}\}.$$ 
This function has been systematically studied by Alon and Shikhelman~\cite{2016Many} and has received much attention, for example, in~\cite{G21DM,G21DMGT,G21arX,G22arX2,GGMV20,GP19,GP20,Hei21,doi:10.1137/19M1239052}.
As the dual of the generalized Tur\'an number, the generalized saturation number is defined as 
$$\sat(n, H, F)=\min\{ N(H,G) : G \text{ is an $n$-vertex $F$-saturated graph}\}.$$ 
When $H=K_2$, it is the classical saturation number $\sat(n, F)$.
Let $S_{n,k}$ be the join graph of a complete graph $K_k$ and an empty graph $\overline{K_{n-k}}$,  which is the graph obtained from joining all the edges between $V(K_k)$  and  $V(\overline{K_{n-k}})$.
  Erd\H{o}s, Hajnal, and Moon \cite{P1964A} studied the saturation number for complete graphs, which initiated the investigation of the graph saturation problem. They proved that 
  \begin{thm}[\cite{P1964A}]\label{THM: Erdos64}
  	For every $n \geq s \geq 2$, the saturation number 
  	\begin{align*}
  		\sat(n,K_s)=(s-2)(n-s+2)+\binom{s-2}{2}.
  	\end{align*}
 Furthermore, $S_{n,s-2}$ is the unique minimal extremal graph among $K_s$-saturated graphs on $n$ vertices.
  \end{thm}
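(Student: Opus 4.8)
The plan is to prove the upper bound from the construction and the matching lower bound, together with the uniqueness of the extremal graph, by induction on $n$. For the upper bound it is enough to check directly that $S_{n,s-2}$ is $K_s$-saturated: it contains a $K_{s-1}$ (the vertex set of the $K_{s-2}$ together with any one vertex of $\overline{K_{n-s+2}}$), it contains no $K_s$ because any $s$ of its vertices include two non-adjacent vertices of $\overline{K_{n-s+2}}$, and the only missing edges are edges $xy$ with $x,y\in V(\overline{K_{n-s+2}})$, whose addition completes $\{x,y\}\cup V(K_{s-2})$ to a $K_s$. Since $e(S_{n,s-2})=\binom{s-2}{2}+(s-2)(n-s+2)$, this gives $\sat(n,K_s)\le(s-2)(n-s+2)+\binom{s-2}{2}$.

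For the lower bound and the uniqueness I would argue by induction on $n$, proving the statement simultaneously for all $s$ with $3\le s\le n$; the case $s=2$ is trivial since $\overline{K_n}$ is the only $K_2$-saturated graph. The base case $n=s$ is immediate, because the unique $K_s$-saturated graph on $s$ vertices is $K_s$ with one edge deleted, which is exactly $S_{s,s-2}$. I record the identities $\sat(n,K_s)=\sat(n-1,K_{s-1})+(n-1)=\sat(n-1,K_s)+(s-2)$ and two elementary facts about any $K_s$-saturated $G$: (i) for each non-edge $xy$, the copy of $K_s$ produced by adding $xy$ exhibits a $K_{s-2}$ inside $N_G(x)\cap N_G(y)$; and hence (ii) every non-universal vertex of $G$ has degree at least $s-2$. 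If $G$ has a universal vertex $v$, then $G-v$ is $K_{s-1}$-saturated on $n-1$ vertices (it is $K_{s-1}$-free, else $G\supseteq K_s$, and any non-edge still creates a $K_{s-1}$, because the new $K_s$ in $G$ it produces loses at most the vertex $v$), so by induction $e(G-v)\ge\sat(n-1,K_{s-1})$ with equality only for $S_{n-1,s-3}$; therefore $e(G)=e(G-v)+(n-1)\ge\sat(n,K_s)$, with equality only for $G=S_{n,s-2}$.

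So suppose $G$ has no universal vertex; then $\delta(G)\ge s-2$ by (ii), and since $S_{n,s-2}$ does have universal vertices it now suffices to prove the strict bound $e(G)>\sat(n,K_s)$. Two sub-cases are routine. If $\delta(G)=s-2$, a minimum-degree vertex $v$ has $N(v)$ equal to a clique $K_{s-2}$ by (i); since $\deg(v)=s-2<s-1$, every copy of $K_s$ in $G+xy$ with $x,y\ne v$ avoids $v$, so $G-v$ is $K_s$-saturated, and by induction $e(G)=e(G-v)+(s-2)\ge\sat(n,K_s)$. Equality would force $G-v=S_{n-1,s-2}$ and, after ruling out the only other placement of the clique $N(v)$ (if $N(v)$ met the independent part of $S_{n-1,s-2}$, then adding an edge from $v$ to a ``wrong'' independent vertex would create no $K_s$, contradicting saturation), $G=S_{n,s-2}$ --- impossible since $G$ has no universal vertex; hence the bound is strict. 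If instead $\delta(G)\ge 2(s-2)$, then $e(G)\ge\delta(G)\,n/2\ge(s-2)n>\sat(n,K_s)$.

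The remaining sub-case, $s-1\le\delta(G)\le 2s-5$ (which can occur only when $s\ge 4$), is where I expect the main difficulty: here the crude counting undershoots $\sat(n,K_s)$ by a quantity of order $s^2$, so a finer study of a minimum-degree vertex $v$ is required. Writing $N(v)=T\cup Z$ with $T$ a $K_{s-2}$ and $|Z|=\delta(G)-(s-2)<s-2$, one sees that $G-v$ is again $K_s$-saturated \emph{unless} some non-edge inside $N(v)$ has all of its newly created copies of $K_s$ passing through $v$, and this forces $G[N(v)]$ to contain a $K_{s-1}$ with exactly one edge deleted, using at least two vertices of $T$. The plan is either to show that this configuration is too constrained to exist --- in which case $G-v$ is $K_s$-saturated and the induction closes with $e(G)\ge\sat(n-1,K_s)+\delta(G)\ge\sat(n,K_s)+1$ --- or, when it survives, to account for the missing $\Theta(s^2)$ edges by tracking more carefully the neighbourhoods of the vertices of $Z$ (which are non-adjacent to all of $T$) and of the vertices in $V\setminus(N(v)\cup\{v\})$, whose contributions the estimate $e(G)\ge\delta(G)+\binom{s-2}{2}+(s-2)\,|V\setminus(N(v)\cup\{v\})|$ does not capture. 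This bookkeeping is the technical heart of the argument; once it is done, assembling all the cases gives $e(G)\ge\sat(n,K_s)$ for every $K_s$-saturated $G$ on $n$ vertices, with equality only through the universal-vertex case and therefore, by induction, only for $G=S_{n,s-2}$.
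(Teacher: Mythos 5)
The paper itself offers no proof of this statement---it is the Erd\H{o}s--Hajnal--Moon theorem, quoted directly from \cite{P1964A}---so your attempt has to be judged on its own merits. Most of what you write is sound: the verification that $S_{n,s-2}$ is $K_s$-saturated with the stated number of edges, the base case $n=s$, the numerical identities, the reduction via a universal vertex to $K_{s-1}$-saturation on $n-1$ vertices, the case $\delta(G)=s-2$ (where $G-v$ is indeed $K_s$-saturated because a $K_s$ through $v$ would need $d(v)\ge s-1$, and your uniqueness analysis of the two possible placements of the clique $N(v)$ inside $S_{n-1,s-2}$ is correct), and the case $\delta(G)\ge 2(s-2)$. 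For $s=2,3$ the intermediate degree range is empty, so these pieces already assemble into a complete proof there.

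The problem is that for every $s\ge 4$ the remaining case $s-1\le\delta(G)\le 2s-5$ is precisely where the difficulty of the theorem lives, and you do not prove it: you only name two possible strategies (show the obstructing configuration cannot occur, or recover the missing edges by finer bookkeeping) without carrying either one out. This is a genuine gap, not a routine verification. Concretely, your own estimate $e(G)\ge\delta(G)+\binom{s-2}{2}+(s-2)\bigl(n-1-\delta(G)\bigr)$ falls short of the target $(s-2)n-\binom{s-1}{2}$ by exactly $(s-3)\bigl(\delta(G)-s+2\bigr)>0$, and the obstruction you identify (a non-edge inside $N(v)$ all of whose newly created copies of $K_s$ pass through $v$) can genuinely occur, so $G-v$ need not be $K_s$-saturated; one must actually locate the extra edges and, moreover, keep the inequality strict to preserve the uniqueness claim. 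There is also a small slip in that sketch: a vertex $z\in Z=N(v)\setminus T$ need not be non-adjacent to all of $T$; one can only say it misses at least one vertex of $T$, since otherwise $T\cup\{v,z\}$ would span a $K_s$. Until this middle-degree case is settled, your argument establishes the lower bound and uniqueness only for $s\le 3$.
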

One can see a survey for significant results of the saturation problem of graphs written by Faudree, Faudree, and Schmitt \cite{2011A} and some recent results~\cite{Lan21,Ma21}. 
Recently, Kritschgau et al~\cite{2020Few} investigated the generalized saturation function for the host graph is a clique or cycle.
Chakraborti and Loh~\cite{CHAKRABORTI2020103185} confirmed a conjecture of Kritschgau et al~\cite{2020Few} by showing the following theorem.
\begin{thm}[\cite{CHAKRABORTI2020103185}]\label{THM: Ch-Loh20}
	 For every $s>r\ge 2$, there exists a constant $n_{r,s}$ such that for all $n\ge n_{r,s}$, 
	 $$\sat(n,K_r, K_s)=(n-s+2){s-2\choose r-1}+\binom{s-2}{r}$$ Moreover, there exists a constant $c_{r,s}>0$ such that the only
	 $K_s$-saturated graph with up to $\sat(n, K_r, K_s) + c_{r,s}n$ many copies of $K_r$ is $S_{n,s-2}$.
\end{thm}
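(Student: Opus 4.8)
\medskip
\noindent\textbf{Proof proposal.}
The plan is to separate the statement into the upper bound $\sat(n,K_r,K_s)\le(n-s+2)\binom{s-2}{r-1}+\binom{s-2}{r}$, which is witnessed directly by $S_{n,s-2}$, and the matching lower bound together with the uniqueness claim, which I would obtain by induction on $s$ after analysing the dominating vertices of a $K_s$-saturated graph. The upper bound is a short computation: $S_{n,s-2}=K_{s-2}+\overline{K_{n-s+2}}$ has clique number $s-1$ and any added edge lies inside $\overline{K_{n-s+2}}$ and creates a $K_s$, so it is $K_s$-saturated; a copy of $K_r$ in it either lies inside the $K_{s-2}$ (there are $\binom{s-2}{r}$ such) or uses one vertex of $\overline{K_{n-s+2}}$ and $r-1$ of the $K_{s-2}$ (there are $(n-s+2)\binom{s-2}{r-1}$ such), and no copy uses two independent vertices.

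For the lower bound, let $G$ be an $n$-vertex $K_s$-saturated graph and let $U=U(G)$ be its set of dominating vertices. First I would record the structure that saturation imposes: for every non-edge $uv$ there is an $(s-2)$-clique inside $N(u)\cap N(v)$, so every non-dominating vertex lies in an $(s-1)$-clique and hence in at least $\binom{s-2}{r-1}$ copies of $K_r$; also $\omega(G)=s-1$ (if $\omega(G)\le s-2$ no added edge could create a $K_s$) and $|U|\le s-2$ (a dominating vertex outside an $(s-1)$-clique would complete a $K_s$); and if $\delta(G)=s-2$ then the $s-2$ neighbours of a minimum-degree vertex are all forced to be dominating, so $|U|=s-2$, and then deleting $U$ leaves an edgeless graph and $G=S_{n,s-2}$. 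Thus the case $|U|=s-2$ is precisely the equality case, and in every other case $\delta(G)\ge s-1$ and every vertex lies in an $(s-1)$-clique.

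Next comes the reduction. If $v\in U$ then $G-v$ is $K_{s-1}$-saturated on $n-1$ vertices, $U(G-v)=U\setminus\{v\}$, and every copy of $K_r$ through $v$ is $v$ together with a copy of $K_{r-1}$ in $G-v$; iterating, if $|U|=j\ge 1$ then $H:=G-U$ is $K_{s-j}$-saturated with no dominating vertex and $N(K_r,G)=\sum_{a\ge 0}\binom{j}{a}N(K_{r-a},H)$. Since $\sum_{a\ge 0}\binom{j}{a}\sat(n-j,K_{r-a},K_{s-j})=\sat(n,K_r,K_s)$ by Vandermonde's convolution (reading the small cases as $N(K_1,\cdot)=n-j$ and $N(K_0,\cdot)=1$), the induction hypothesis at the smaller parameter $s-j<s$ gives $N(K_r,G)\ge\sat(n,K_r,K_s)$; and if $1\le j\le s-3$ then $H$ is not $S_{n-j,s-j-2}$ because it has no dominating vertex, so the stability part of the hypothesis makes at least one term $N(K_{r-a},H)$ with $r-a\ge 2$ exceed its extremal value by $\Omega(n)$, whence $N(K_r,G)>\sat(n,K_r,K_s)+c_{r,s}n$ once $c_{r,s}$ is small relative to the constants for $s-j$. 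So among $K_s$-saturated graphs with at least one dominating vertex, only $S_{n,s-2}$ has at most $\sat(n,K_r,K_s)+c_{r,s}n$ copies of $K_r$; the induction starts at $s=3$ from the trivialities that $K_2$-saturated graphs are edgeless and $N(K_1,\cdot)$ is constant.

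The case the reduction never reaches — and the step I expect to be the main obstacle — is $U=\varnothing$, a $K_s$-saturated graph with no dominating vertex. Here one gets for free only that $\delta(G)\ge s-1$, $\omega(G)=s-1$ and every vertex lies in an $(s-1)$-clique, which yields just $N(K_r,G)\ge\tfrac1r n\binom{s-2}{r-1}$, short of the target $\approx n\binom{s-2}{r-1}$ by a factor of about $r$. The real content of the theorem is that the absence of a dominating vertex forces the copies of $K_r$ to be spread out: whereas in $S_{n,s-2}$ every $(s-2)$-clique sits inside a single fixed $K_{s-2}$, no such clustering is possible here, and I would try to make this quantitative — showing that $\Omega(n)$ vertices lie in strictly more than $\binom{s-2}{r-1}$ copies of $K_r$, or equivalently that $\Omega(n)$ distinct $(s-1)$-cliques occur — so that $N(K_r,G)\ge\sat(n,K_r,K_s)+c_{r,s}n$. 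Proving this clique-spreading estimate, which is essentially a stability statement for the clique structure of $K_s$-saturated graphs with no vertex of full degree, is where the bulk of the work lies and is what ultimately fixes the constant $c_{r,s}$ and the uniqueness threshold.
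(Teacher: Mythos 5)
This statement is Theorem~\ref{THM: Ch-Loh20}, which the paper does not prove at all: it is quoted from Chakraborti and Loh \cite{CHAKRABORTI2020103185}, so there is no in-paper argument to compare yours against. Judged on its own, your write-up gets the easy half right and sets up a sound reduction, but it does not constitute a proof. The upper bound via $S_{n,s-2}$ is fine, and the dominating-vertex reduction is correct as far as it goes: if $U$ is the set of dominating vertices then $U$ is a clique with $|U|\le s-2$, $H=G-U$ is $K_{s-|U|}$-saturated with no dominating vertex, $N(K_r,G)=\sum_a\binom{|U|}{a}N(K_{r-a},H)$, the case $\delta(G)=s-2$ (equivalently $|U|=s-2$) forces $G\cong S_{n,s-2}$, and the Vandermonde-type identity for the target value is valid because $S_{n,s-2}=K_{|U|}+S_{n-|U|,s-|U|-2}$.

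The genuine gap is the case you yourself flag: $K_s$-saturated graphs with $U=\varnothing$ (and, through the induction, the same case at every smaller parameter $s-j\ge 3$, including the base case $s=3$, where one must show that a maximal triangle-free graph that is not a star has at least $(1+c)n$ edges — true, but not a triviality). For such graphs your argument only yields $N(K_r,G)\ge \tfrac{1}{r}\,n\binom{s-2}{r-1}$, which is short of the claimed value by a factor of about $r$; so even the plain equality $\sat(n,K_r,K_s)=N(K_r,S_{n,s-2})$, let alone the $c_{r,s}n$-stability statement, is not established. The ``clique-spreading'' estimate you defer — that absence of a dominating vertex forces $\Omega(n)$ extra copies of $K_r$ — is precisely the substance of the Chakraborti--Loh theorem; it is not a routine lemma one can wave at, and their actual argument goes a different way (bounding the edge count of a minimizer, counting independent sets/pairs and their common $(s-2)$-booster sets, then a structural cleanup), rather than inducting on dominating vertices. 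As it stands, your proposal is an honest outline whose central step is missing, so it cannot be accepted as a proof of the stated theorem.
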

Ergemlidze et al~\cite{2021Minimizing} investigated the saturation number $\sat(n,K_{a,b}, K_s)$ and showed that  $\sat(n, K_{1,t}, K_s)=\Theta(n^{\frac{t}{2}})$, furthermore, they gave more discussion about $K_{1,2}$ and $K_{2,t}$, answering a question of Chakraborti and Loh~\cite{CHAKRABORTI2020103185}.


In this note, we continue the study of the function $\sat(n, H, K_s)$ when $H$ is a matching with $k$ edges.
  We call an $n$-vertex $F$-saturated graph (minimal) extremal graph if $N(H, G)=\sat(n, H, F)$. Let $M_k$ be a matching with $k$ edges.
The main result of this article is as follows.  
\begin{thm}\label{THM: 1}
	For every $s\geq 3$ and $k\geq 2$, there is a constant $N$ such that
	$$\sat(n, M_k, K_s)=N(M_k, S_{n,s-2})$$
	for  $n\ge N$.
	Moreover,  $S_{n,s-2}$ is the unique extremal graph  when $k \leq s-2$. 
	In addition,  $S_{n,1}$ is the unique extremal graph for $s=3$ and $k=2$.
\end{thm}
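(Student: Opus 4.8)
The plan is to first compute $N(M_k, S_{n,s-2})$ exactly and then show every $K_s$-saturated graph $G$ on $n$ vertices has at least this many $M_k$'s, with equality analysis for uniqueness. In $S_{n,s-2}$ the independent set $\overline{K_{n-s+2}}$ has size $n-s+2$ and the clique part $K_{s-2}$ has size $s-2$; a copy of $M_k$ uses some number $j$ of edges incident to (or inside) the clique and $k-j$ edges that must be ``cross'' edges from the clique to the independent set or edges inside the clique. Since the independent set carries no edges, every edge of a matching in $S_{n,s-2}$ meets $V(K_{s-2})$, so $M_k$ exists only for $k\le s-2$... wait — more carefully, each matching edge uses at least one clique vertex, hence $N(M_k,S_{n,s-2})=0$ when $k>s-2$, which forces the theorem's statement to mean: for $k>s-2$ the minimum is $0$ and is \emph{not} unique (explaining why uniqueness is only claimed for $k\le s-2$). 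So the substantive content splits: (i) for $k\le s-2$, compute $N(M_k,S_{n,s-2})=\sum_{j}\binom{s-2}{2j}(2j-1)!!\binom{s-2-2j}{k-2j}\binom{n-s+2}{k-2j}(k-2j)!\cdot(\text{stuff})$ — a polynomial in $n$ of degree $k$ with leading term $\binom{s-2}{k}\binom{n}{k}k!/k!\sim \binom{s-2}{k}\frac{n^k}{k!}$ roughly — and show any other $K_s$-saturated $G$ has strictly more; (ii) for $k\ge s-1$, show that $0$ is attained, i.e. there exists a $K_s$-saturated graph with no $M_k$.

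For the lower bound when $k\le s-2$, I would argue as follows. Let $G$ be $K_s$-saturated on $n$ vertices. By the Erd\H{o}s--Hajnal--Moon bound (Theorem \ref{THM: Erdos64}), $G$ has at least $(s-2)(n-s+2)+\binom{s-2}{2}$ edges, but I need structural information, not just edge count. The key classical fact is that in a $K_s$-saturated graph, for every non-edge $uv$ the common neighborhood $N(u)\cap N(v)$ contains a $K_{s-2}$; in particular the minimum degree is at least $s-2$. I would then count $M_k$'s greedily: pick a vertex $v$ of degree $\ge s-2$, and build matchings edge by edge. To beat $S_{n,s-2}$ I want to show $G$ either has many high-degree vertices (giving extra matchings) or looks essentially like $S_{n,s-2}$. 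The cleanest route is a stability/extremal argument: if $G$ has a vertex set $D$ of $s-2$ vertices dominating ``almost everything,'' compare directly; otherwise find two vertices $u,v$ outside a would-be dominating set with $\deg(u),\deg(v)$ large and disjoint enough neighborhoods to inject extra copies of $M_k$ not present in $S_{n,s-2}$, using that $N(M_k,G)\ge N(M_k,G-e)+(\text{number of }M_{k-1}\text{ in a suitable subgraph avoiding }e)$. Making the injection lossless enough to force uniqueness is the crux.

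The main obstacle I expect is the uniqueness (stability) step for $k\le s-2$: ruling out \emph{all} $K_s$-saturated graphs that are not $S_{n,s-2}$ but still have $N(M_k,G)\le N(M_k,S_{n,s-2})$. A natural strategy is to show first that the extremal $G$ must have a dominating $K_{s-2}$ (every other vertex adjacent to all of it) — this uses that any vertex $w$ not adjacent to some clique vertex creates, via saturation, a new $K_{s-2}$ in $N(w)$, and iterating/counting the resulting matchings forces a contradiction unless the clique is global; then, once $G\supseteq S_{n,s-2}$ as a spanning subgraph with the same vertex partition, any extra edge $e$ inside the independent part would create strictly more $M_k$'s (here one checks $e$ together with $k-1$ independent cross-edges gives a fresh matching), so $G=S_{n,s-2}$ exactly. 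For the special case $s=3$, $k=2$: $S_{n,1}$ is the star $K_{1,n-1}$, $N(M_2,K_{1,n-1})=0$, and one must show every other $K_3$-saturated graph (which has an edge outside any single dominating vertex, since $K_3$-saturated non-stars exist) actually contains an $M_2$ — equivalently, the only $K_3$-saturated triangle-free... — the only $K_3$-saturated graph with no two independent edges is the star, a short direct argument (a $K_3$-saturated graph with matching number $1$ must be a star or triangle, and the triangle isn't saturated for $n>3$). For $k\ge s-1$ I would exhibit the explicit graph: take $K_{s-1}$ and blow up suitably, or more simply note that $S_{n,s-2}$ plus a perfect matching's worth of structure is not needed — instead use a $K_s$-saturated graph built from a clique $K_{s-1}$ together with pendant-type attachments arranged so that $\nu(G)\le k-1$; the standard construction is $G=K_{s-2}\vee H$ where $H$ is a $K_2$-saturated (i.e. $P_3$-free, so a union of ... ) small graph chosen with bounded matching number, and one verifies $K_s$-saturation and $\nu(G)<k$, hence $N(M_k,G)=0$. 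Finishing cleanly requires checking such an $H$ exists with the right matching number and that $K_{s-2}\vee H$ is genuinely $K_s$-saturated for large $n$, which is routine but must be done carefully.
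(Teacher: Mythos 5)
Your overall shape (exact count in $S_{n,s-2}$, then a lower bound with a stability step for uniqueness) is the right one, and your handling of the $s=3$, $k=2$ case matches the paper's. But there are genuine gaps. The asymptotic lower bound and the uniqueness step for $3\le k\le s-2$ --- which you yourself flag as ``the crux'' --- are left unresolved, and the injection idea as stated has no quantitative teeth. The paper's mechanism is concrete: it first shows that any extremal $G$ has $e(G)=O(nf(n))$ for every $f\to\infty$ (a vertex of degree $>f(n)$, combined with the $\Theta(n^{k-1})$ independent $(k-1)$-sets available in any $K_s$-free graph, forces too many copies of $M_k$); it then invokes Lov\'asz's bound to get $(1-o(1))\binom{n}{k}$ independent $k$-sets, shows each contributes at least $(s-2)_k$ matchings through $V\setminus A$ with near-equality only when all the saturation cliques $V_{i,j}$ coincide, deduces that $G$ contains $S_{n-o(n),s-2}$, and finally eliminates the $o(n)$ exceptional vertices by a careful second-order count against the upper bound. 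Without the sparsity lemma and the independent-set count, your ``find two high-degree vertices with disjoint neighborhoods'' step cannot be made to beat $N(M_k,S_{n,s-2})$ by the required margin.

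Second, the case $k=2$, $s\ge 4$ cannot be handled by your dominating-$K_{s-2}$ stability route: the paper explicitly notes that the ``$G$ contains $S_{n-o(n),s-2}$'' conclusion fails for $k=2$, and instead uses the exact identity $N(M_2,G)=\frac12\bigl(m^2+m-\sum_{v}d_G(v)^2\bigr)$ together with a degree-shifting (convexity) argument and the Erd\H{o}s--Hajnal--Moon edge bound; your plan does not distinguish $k=2$ from $k\ge3$ here. Two smaller points: for $k>s-2$ no new construction is needed, since $S_{n,s-2}$ itself has matching number $s-2<k$ and is $K_s$-saturated, so $\sat(n,M_k,K_s)=0$ is immediate and your proposed $K_{s-2}\vee H$ construction is superfluous; and your stated leading term $\binom{s-2}{k}\frac{n^k}{k!}$ is off by a factor of $k!$ --- the correct asymptotic is $\frac{(s-2)_k}{k!}\,n^k(1+o(1))=\binom{s-2}{k}n^k(1+o(1))$.
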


The rest of the paper is arranged as follows. Section 2 gives some preliminaries for the proof of Theorem~\ref{THM: 1}. We show Theorem~\ref{THM: 1} in Section 3. Some discussions will be given in the last section.

\section{Preliminaries}
It can be easily checked that $S_{n,s-2}$ is $K_s$-saturated. Therefore, the number of copies of $M_k$ in $S_{n,s-2}$ gives an upper bound for $\sat(n, M_k, K_s)$.
When $k>s-2$, we have 
\begin{equation}\label{EQ: k>s-2}
	0\le\sat(n,M_k,K_s)\le N(M_k, S_{n,s-2})=0.
\end{equation}
Now let us count $N(M_k, S_{n,s-2})$ when $2\le k\le s-2$. Let $A$ be an independent set of size $k$, there are $\binom{n-s+2}{k}$ many ways to pick an independent set of size $k$ in the graph  $S_{n,s-2}$. Then for an arbitrary independent set $A$, the number of $M_k$ containing $A$ is $(s-2)_k$, and each $M_k$ is counted once. In addition, there does not exist a copy of $M_k$ with more than $k$ vertices in $\overline{K_{n-s+2}}$ because  $\alpha(M_k)=k$. Therefore, we have 
\begin{lem}\label{LEM: upperbound}
Let $k, s$ be integers with $2\le k\le s-2$. Then 
\begin{align}\label{EQ: upper}
	\sat(n,M_k,K_s) &\leq N(M_k, S_{n,s-2})=\frac{(s-2)_k}{k!}n^k+O(n^{k-1}).  
\end{align}
\end{lem}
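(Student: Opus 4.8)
The inequality $\sat(n,M_k,K_s)\le N(M_k,S_{n,s-2})$ is immediate: as already observed at the start of this section, $S_{n,s-2}$ is $K_s$-saturated, hence it is one of the graphs over which the minimum defining $\sat(n,M_k,K_s)$ is taken. So the whole lemma reduces to evaluating $N(M_k,S_{n,s-2})$ with error term $O(n^{k-1})$, and I would do this by a direct classification of the copies of $M_k$ according to how many of their vertices fall into the independent part.

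Write $A=V(K_{s-2})$ and $B=V(\overline{K_{n-s+2}})$, so $|A|=s-2$, $|B|=n-s+2$, the set $B$ spans no edge, and every pair meeting $A$ is an edge. For a copy of $M_k$, let $t$ be the number of its $2k$ vertices lying in $B$. Since $B$ is independent, those $t$ vertices form an independent set of the copy; as a matching of $k$ edges has independence number $k$ (indeed $\alpha(M_k)=k$), this forces $t\le k$. The plan is to isolate the case $t=k$, which yields the leading $n^k$ contribution, and to bound the rest crudely.

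For $t=k$: the $k$ vertices in $B$ form a maximum independent set of $M_k$, so they pick exactly one endpoint of each edge, the other endpoint of every edge lying in $A$; equivalently, the matching is a transversal between a $k$-subset of $B$ and $k$ distinct vertices of $A$. Such a copy is thus specified by a $k$-subset of $B$ together with an injection of it into $A$, giving $\binom{n-s+2}{k}(s-2)_k$ copies, each obtained exactly once. Since $\binom{n-s+2}{k}(s-2)_k=\tfrac{(s-2)_k}{k!}n^k+O(n^{k-1})$, this is the claimed main term. For $t\le k-1$: at least $k+1$ of the $2k$ vertices lie in $A$, so there are at most $\binom{|B|}{k-1}=O(n^{k-1})$ choices for the $B$-part of the vertex set, a bounded number of choices for the remaining vertices inside $A$, and a bounded number of matchings on a fixed $2k$-set; this contributes $O(n^{k-1})$ (and nothing at all when $k=s-2$). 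Adding the two contributions gives $N(M_k,S_{n,s-2})=\tfrac{(s-2)_k}{k!}n^k+O(n^{k-1})$.

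I do not foresee a genuine obstacle here; the argument is a routine counting decomposition. The only points that need a little care are checking that the copies with $t=k$ are enumerated with neither repetition nor omission, and that the coarse $O(n^{k-1})$ bound on the copies with $t\le k-1$ is legitimate, both of which are straightforward because $B$ can contribute at most $k-1$ vertices to those remaining copies.
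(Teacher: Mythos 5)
Your proposal is correct and follows essentially the same route as the paper: the paper also notes that $S_{n,s-2}$ is $K_s$-saturated, counts the main term by choosing a $k$-subset of the independent part and an injection into the $K_{s-2}$ (giving $\binom{n-s+2}{k}(s-2)_k$), and uses $\alpha(M_k)=k$ to cap the number of vertices in the independent part. Your explicit $O(n^{k-1})$ bound on the copies with at most $k-1$ vertices in the independent part is a small point the paper glosses over, but it is the same argument.
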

The following lemma counts the number of independent sets of fixed size in $K_s$-free graphs, which has been given in~\cite{CHAKRABORTI2020103185}.
\begin{lem}[\cite{CHAKRABORTI2020103185}]\label{LEM: 1}
	For every fixed $\ell$, there are $\Theta(n^\ell)$ independent sets of order $\ell$ in every $n$-vertex $K_s$-free graph.
\end{lem}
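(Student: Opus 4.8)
The claim is that the number of independent $\ell$-sets in an $n$-vertex $K_s$-free graph $G$, which in the paper's notation is $N(\overline{K_\ell}, G)$, is $\Theta(n^\ell)$; here $s$ and $\ell$ are treated as constants and $n\to\infty$. The upper bound $N(\overline{K_\ell},G)\le\binom{n}{\ell}=O(n^\ell)$ is immediate, since every independent $\ell$-set is in particular an $\ell$-element subset of $V(G)$. So the content of the lemma is the matching lower bound $N(\overline{K_\ell},G)=\Omega(n^\ell)$, and the plan is to prove it by passing to the complement and using Ramsey's theorem.

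Let $H=\overline{G}$. Since $G$ is $K_s$-free, $H$ has no independent set of size $s$, i.e. $\alpha(H)\le s-1$, and an independent $\ell$-set of $G$ is exactly a copy of $K_\ell$ in $H$; thus it suffices to show $N(K_\ell,H)=\Omega(n^\ell)$. Let $r=R(\ell,s)$ denote the Ramsey number, a constant depending only on $\ell$ and $s$. By the defining property of $r$, every $r$-element subset of $V(H)$ spans either a $K_\ell$ or an independent set of size $s$; the latter is impossible because $\alpha(H)\le s-1$, so \emph{every} $r$-subset of $V(H)$ contains a copy of $K_\ell$.

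Now I would finish by a standard double-counting. Assume $n\ge r$ (true for all large $n$). There are $\binom{n}{r}$ subsets of $V(H)$ of size $r$, and each contains at least one $K_\ell$, while any fixed copy of $K_\ell$ lies in exactly $\binom{n-\ell}{r-\ell}$ such $r$-subsets. Summing over all $r$-subsets the number of $K_\ell$'s they contain therefore gives $N(K_\ell,H)\cdot\binom{n-\ell}{r-\ell}\ge\binom{n}{r}$, whence
$$
N(K_\ell,H)\ \ge\ \frac{\binom{n}{r}}{\binom{n-\ell}{r-\ell}}\ =\ \frac{\binom{n}{\ell}}{\binom{r}{\ell}}\ =\ \Omega(n^\ell),
$$
because $\binom{r}{\ell}$ is a constant. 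Translating back, $N(\overline{K_\ell},G)=N(K_\ell,H)=\Omega(n^\ell)$, which combined with the trivial upper bound yields $N(\overline{K_\ell},G)=\Theta(n^\ell)$.

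There is essentially no serious obstacle here; the only point worth stating explicitly is uniformity — the implied constants in $\Theta(n^\ell)$ must depend only on $\ell$ and $s$ and not on the particular graph $G$, which is clear since $r=R(\ell,s)$ and $\binom{r}{\ell}$ depend only on $\ell$ and $s$. One could instead try to build copies of $K_\ell$ in $H$ greedily (using that $\alpha(H)\le s-1$ forces, by Turán's theorem applied to $G$, a vertex of degree $\ge n/(s-1)-1$ in $H$, and that this property is inherited by induced subgraphs such as common neighborhoods), but making that route actually produce $\Omega(n^\ell)$ \emph{distinct} copies rather than just one requires extra care, so the Ramsey double-counting above is the cleaner argument and immediately gives the exact order $n^\ell$.
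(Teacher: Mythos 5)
Your argument is correct: passing to the complement, invoking $r=R(\ell,s)$ so that every $r$-subset contains a copy of $K_\ell$, and double counting via $\binom{n}{r}\binom{r}{\ell}=\binom{n}{\ell}\binom{n-\ell}{r-\ell}$ gives the lower bound $\binom{n}{\ell}/\binom{r}{\ell}=\Omega(n^\ell)$ with constants depending only on $\ell$ and $s$, and the upper bound $\binom{n}{\ell}$ is trivial. The paper itself gives no proof (the lemma is quoted from Chakraborti--Loh), and your Ramsey-plus-double-counting argument is essentially the standard one from that source, so there is nothing to fault here.
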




We give an asymptotic upper bound on the number of edges of a $K_s$-saturated graph on $n$ vertices with the minimum number of copies of $M_k$.

\begin{lem}\label{LEM: 2}
	Let  $k, s$ be integers with $2\le k\le s-2$ and let $G$ be a $K_s$-saturated graph on $n$ vertices with the minimum number of copies of $M_k$. Then for any function $f(n)\rightarrow \infty$ as $n\rightarrow \infty$, we have 
	 $|E(G)|=O(nf(n))$.
\end{lem}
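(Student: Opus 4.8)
The strategy is to argue by contradiction: suppose $G$ is a $K_s$-saturated graph on $n$ vertices with $|E(G)| \ge c\, n f(n)$ for infinitely many $n$, where $f(n)\to\infty$. I will show that $G$ then contains strictly more copies of $M_k$ than $S_{n,s-2}$ does (i.e. more than $\frac{(s-2)_k}{k!}n^k + O(n^{k-1})$ by Lemma~\ref{LEM: upperbound}), contradicting the minimality of $G$. The idea is that many edges force many copies of $M_k$; since a bound of $O(n^k)$ copies of $M_k$ is already forced just by the edge count being $\Theta(n)$, having $|E(G)| \gg n$ should push the count past the $S_{n,s-2}$ value. The main tool is a lower bound on $N(M_k,G)$ in terms of $|E(G)|$ and $n$.

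Concretely, first I would recall that $G$ is $K_s$-free, hence $K_{s-1}$-free... no --- $G$ is $K_s$-free but may contain $K_{s-1}$; the key structural input is instead Lemma~\ref{LEM: 1}: every $n$-vertex $K_s$-free graph has $\Theta(n^\ell)$ independent sets of each fixed order $\ell$. Combined with a standard argument, this gives that from the $|E(G)|$ edges one can extract many vertex-disjoint families of edges. More precisely, I would proceed as follows. (i) Use a greedy/iterative extraction: as long as the remaining graph has at least, say, $2k$ vertices of positive degree, pick an edge and delete its two endpoints; repeating this produces a matching, and the number of steps is controlled by how fast edges disappear, which is $O(n)$ per deleted vertex, so one gets a matching of size $\Omega(|E(G)|/n) = \Omega(f(n)) \to \infty$. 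Thus $G$ contains a matching of size $m$ with $m\to\infty$; in particular $m \ge k$ for large $n$. (ii) Given a matching $N$ of size $m = \Omega(f(n))$ in $G$, count copies of $M_k$ as follows: choose $k-1$ edges of $N$ (this is $\binom{m}{k-1} = \Omega(f(n)^{k-1})$ ways), these use $2(k-1)$ vertices, and then extend by one more edge disjoint from those $2(k-1)$ vertices. For the last edge, I want to show there are $\Omega(n)$ choices of an edge avoiding a fixed set of $2(k-1)$ vertices; this follows because $G$ has a matching of size $m \ge 2k-1$, so even after deleting any $2(k-1)$ vertices a matching of size at least one survives — but to get $\Omega(n)$ I instead count: the number of edges of $G$ incident to a fixed set $T$ of $2(k-1)$ vertices is at most ... this needs a degree bound, which we don't have a priori.

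So a cleaner route for step (ii): since $\alpha(M_k) = k$, a copy of $M_k$ is determined by a choice of $k$ pairwise-nonadjacent-in-$M_k$ vertices that happen to be... Actually the robust count is: take the matching $N$ of size $m\to\infty$; the number of sub-matchings of $N$ of size exactly $k$ is $\binom{m}{k}$, and each such sub-matching is a copy of $M_k$ in $G$. This already gives $N(M_k, G) \ge \binom{m}{k} \to \infty$, but that is far too weak — it is only polynomial in $f(n)$, not in $n$. The point I actually need is the interaction between the large matching and the $\Theta(n^{k-1})$ independent sets / $\Theta(n)$ low-degree vertices guaranteed by $K_s$-freeness. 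Here is the fix: $G$ is $K_s$-saturated, and one shows most vertices have degree $\le s-2$ would already bound $|E(G)| = O(n)$; so if $|E(G)| \gg n$ there must be many vertices of large degree, OR the edges concentrate, and in either case, pairing a fixed $(k-1)$-edge sub-matching $M'$ of $N$ with an edge drawn from $\Theta(n)$ edges lying in the low-degree part (which exists and is disjoint-extendable because the low-degree vertices form a set whose induced graph still has $\Omega(n)$ edges or an independent set of size $\Omega(n)$) yields $\binom{m}{k-1}\cdot \Omega(n) = \Omega(f(n)^{k-1} n)$ copies of $M_k$. Wait — this is still not obviously more than $\Theta(n^k)$.

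**The real main obstacle.** I expect the crux is precisely this: a matching of size $\Omega(f(n))$ alone gives only $\Omega(f(n)^k)$ copies of $M_k$, which is $o(n^k)$, so it does NOT contradict minimality. Therefore the proof must be subtler and cannot merely extract one large matching. The right approach, I believe, is: assume $|E(G)| \ge \omega(n)$ and derive a contradiction with the *minimality* of the $M_k$-count by a local-switching / edge-redistribution argument rather than a direct count — or, more likely, one combines the large-matching extraction with Lemma~\ref{LEM: 1} to say that $G$ contains $k-1$ disjoint "stars" or a $K_{k-1}$-neighborhood structure plus $\Theta(n)$ independent vertices each of which extends, giving $\Theta(n^{k-1})$ for the independent-set part times $\Omega(f(n))$ from a genuinely separate large matching living on few vertices, i.e. $\Theta(n^{k-1} f(n)) \gg n^{k-1}$ but one needs to beat $n^k$. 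Reconciling these scales is the heart of the matter. My plan is therefore: (1) show via $K_s$-saturation that vertices of degree $> s-2$ are "rare" in a precise sense — specifically that if $G$ has $\ge c n f(n)$ edges then it has $\ge c' f(n)$ vertices of degree $\ge$ some growing threshold, or a subset $W$ with $|W| = \Omega(f(n))$ spanning $\Omega(|W| \cdot n)$... and (2) use these high-degree vertices together with the $\binom{n-O(1)}{k-1} = \Theta(n^{k-1})$ independent sets in the rest to build $\Omega(n^{k-1} f(n))$ copies of $M_k$ from each high-degree vertex's edges, totalling $\Omega(n^{k-1} f(n)) = \omega(n^{k-1})$, and finally observe that the $S_{n,s-2}$ bound's leading term $\frac{(s-2)_k}{k!} n^k$ must be matched, which forces a comparison I would need to carry out carefully. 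I anticipate that the actual argument sidesteps beating $n^k$ and instead shows $|E(G)| = O(nf(n))$ is what's needed for *later* lemmas (not that $|E(G)| \ge nf(n)$ contradicts minimality outright), so the proof likely shows: minimality $\Rightarrow$ $N(M_k,G) \le N(M_k,S_{n,s-2}) = \Theta(n^k)$, and separately $N(M_k,G) \ge g(|E(G)|, n)$ for an explicit $g$ growing faster than $n^k$ once $|E(G)|/n \to \infty$ with an appropriate rate — precisely, using the greedy matching of size $\Omega(|E(G)|/n)$ and extending by independent sets, $N(M_k,G) = \Omega\!\big((|E(G)|/n)^{k} \big)$ or $\Omega\!\big(n^{k-1} \cdot |E(G)|/n\big)$, and choosing $f$ to grow slowly this still forces $|E(G)| = O(nf(n))$ for *every* such $f$, which is the stated conclusion. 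Pinning down the exact exponent in $g$ is the step I'd scrutinize most.
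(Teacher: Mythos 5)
Your proposal has the right outer shape (contradict the upper bound $N(M_k,S_{n,s-2})=\frac{(s-2)_k}{k!}n^k+O(n^{k-1})$ of Lemma~\ref{LEM: upperbound}) and even circles the right ingredients (high-degree vertices plus the $\Theta(n^{k-1})$ independent sets from Lemma~\ref{LEM: 1}), but it never produces a lower bound on $N(M_k,G)$ that actually exceeds $\Theta(n^k)$: every count you write down --- $\binom{m}{k}$ from a greedily extracted matching, $\Omega(f(n)^{k-1}n)$, $\Omega(n^{k-1}f(n))$, $\Omega\bigl((|E(G)|/n)^k\bigr)$, $\Omega\bigl(n^{k-1}\cdot |E(G)|/n\bigr)$ --- is $o(n^k)$, so the contradiction never materializes, and you explicitly leave ``pinning down the exact exponent'' open. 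That unfinished step is exactly the content of the lemma, so as it stands there is a genuine gap; moreover, you never specify how the $k-1$ independent vertices get matched, which is where the saturation hypothesis must be used.

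The missing idea is a per-vertex count carrying a full factor of $d(v)$ for each independent $(k-1)$-set, not a factor $1$ (or $f(n)$). Let $B=\{v:d_G(v)>f(n)\}$; if $|E(G)|$ is not $O(nf(n))$, then $\sum_{v\in B}d_G(v)\ge 2|E(G)|-nf(n)\gg nf(n)$. Fix $v\in B$ and any independent $(k-1)$-set $I=\{v_1,\dots,v_{k-1}\}$ in $G-v$; there are $\Theta(n^{k-1})$ such sets by Lemma~\ref{LEM: 1}. Applying $K_s$-saturation to the non-edges $vv_1$ (if present as a non-edge) and $v_iv_{i+1}$ yields $(s-2)$-sets $V_0,V_1,\dots,V_{k-2}$ in which the $v_i$'s have all the needed neighbours; since $k-1<s-2$ one can pick distinct partners $w_i\in V_i$ for $v_1,\dots,v_{k-1}$, and then $v$ may be matched to any of its at least $d(v)-ks\ge\frac12 d(v)$ neighbours outside $U=V_0\cup\cdots\cup V_{k-2}\cup I$. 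Thus each pair $(v,I)$ contributes at least $\frac12 d(v)$ distinct copies of $M_k$, and since each copy is counted only $O_k(1)$ times, $N(M_k,G)=\Omega\bigl(n^{k-1}\sum_{v\in B}d_G(v)\bigr)=\Omega\bigl(n^{k}f(n)\bigr)$, which for $f(n)\to\infty$ strictly exceeds the $O(n^k)$ upper bound --- the desired contradiction. It is the free choice of $v$'s partner among its $\Omega(d(v))$ surviving neighbours, multiplied against the $\Theta(n^{k-1})$ independent sets, that lifts the count above $n^k$; your sketches lose this factor because they tie the ``extra'' edges into a single matching or count only one completion per high-degree vertex.
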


\begin{proof} 
We can assume that $f(n)=O(\log n)$ and $G$ has more than $O(nf(n))$ edges. We will find a contradiction. Let $B= \{v\in V(G) : d_G(v) > f(n)\}$. Then 
$$\sum_{v\in B}d_G(v)=2|E(G)|-\sum_{v\notin B}d_G(v)\ge 2|E(G)|-(n-|B|)f(n)>O(nf(n)).$$ 
Choose a vertex $v\in B$, we select an independent set $I = \{v_1, \cdots, v_{k-1}\}$ in $V (G) \backslash \{v\}$. 
Let $V_0$ be  an empty set if $vv_1\in E(G)$, otherwise, let $V_0$ be the set such that the induced graph $G[V_0\cup \{v, v_1\}]\cong K_s$, where $V_0$ exists since $G$ is $K_s$-saturated. For every $i\in [k-2]$, denote $V_i$ the set such that the induced subgraph $G[V_i\cup \{v_i, v_{i+1}\}]\cong K_s$ after adding the edge $v_iv_{i+1}$. Let $U=V_0\cup V_1 \cup \cdots \cup V_{k-2}\cup I$ and $V'=N_G(v)-U$. 
Therefore, $|V'|\geq d_G(v)-|U|\geq d_G(v)-ks \geq \frac{1}{2} d_G(v)$ because $|U|\le ks<\frac 12f(n)<\frac 12 d_G(v)$ when $n$ is large enough.
\begin{claim}\label{CLM: cl1}
The number of $M_k$ containing $I\cup\{v\}$ is at least $\frac{1}{2} d(v)$.
\end{claim}
\begin{proof}[Proof of Claim~\ref{CLM: cl1}.] 
To prove the claim, it is sufficient to show that there is at least one copy of matching $M_k$ containing $I\cup\{v, u\}$ for every $u\in V'$.

Case 1: $vv_1\in E(G)$. Then $V_0=\emptyset$. Since $|V_i|=s-2$ and $1\le k-1<s-2$ for all $i\in[k-2]$, we can find a copy of $M_k=\{uv, w_1'v_1, w_1v_2,\ldots, w_{k-2}v_{k-1}\}$ by selecting two distinct vertices $w_1,  w_1'$ from $V_1$ and $w_i$ from $V_i-\{w_1', w_1, w_2, \ldots, w_{i-1}\}$ for $i=2, \ldots, k-2$. 
	
Case 2: $vv_1\notin E(G)$. Then $V_0\not=\emptyset$. Because $|V_i|=s-2$ and $k-1<s-2$ for all $i=0,1,\ldots,k-2$, we can find a copy of $M_k=\{uv,w_0v_1,w_1v_2,\cdots,w_{k-2}v_{k-1}\}$ by selecting $w_0$ from $V_0$ and $w_i$ from $V_i-\{w_0,w_1,\ldots, w_{i-1}\}$ for $i=1,2,\ldots,k-2$.
\end{proof}
For any $v\in B$, the subgraph $G-\{v\}$ is also $K_s$-free, so there are $\Theta(n^{k-1})$ independent sets of size $k-1$ in $G-\{v\}$ by Lemma \ref{LEM: 1}. By Claim~\ref{CLM: cl1}, the number of copies of $M_k$ containing such an independent set and $v$ is at least $\frac{1}{2}d(v)$. Therefore, we have at least $\Theta\left(\sum_{v\in B}n^{k-1}d(v)\right)= \Theta(n^kf(n))$ many copies of $M_k$ in $G$ (note that an $M_k$ copy can only be counted at most a constant time that depends on $k$), which contradicts (\ref{EQ: upper}) for sufficiently large $n$.
	
\end{proof}

The following lemma counts the number of independent sets of fixed size in graphs with bounded edges.
\begin{lem}[\cite{L1993Combinatorial}]\label{LEM: 3}
Let $\tau, \ell$ be positive integers with $\ell \leq \tau + 1$ and let $G$ be a graph on $n$ vertices with $\frac{1}{\tau} \binom{n}{2}$ many edges.  Then the number of independent sets of order $\ell$ in $G$ is at least $\binom{\tau}{\ell}(\frac{n}{\tau})^\ell$.
\end{lem}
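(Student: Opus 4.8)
The plan is to pass to the complement, where the statement turns into a clique-count estimate. Write $H=\overline{G}$; then $H$ has $n$ vertices and at least $\binom n2-\frac1\tau\binom n2=\left(1-\frac1\tau\right)\binom n2$ edges, and the independent $\ell$-sets of $G$ are exactly the copies of $K_\ell$ in $H$. Since $\left(1-\frac1\tau\right)\binom n2$ is, up to a lower-order term, the number of edges of the Tur\'an graph $T(n,\tau)$ (the complete balanced $\tau$-partite graph), and $T(n,\tau)$ contains $\binom{\tau}{\ell}$ times a product of $\ell$ of its part-sizes, which equals $\binom{\tau}{\ell}(n/\tau)^\ell$ when $\tau\mid n$, the assertion is equivalent to the following extremal fact: among all $n$-vertex graphs with at least $|E(T(n,\tau))|$ edges, the graph $T(n,\tau)$ minimizes $N(K_\ell,\cdot)$ (for $2\le\ell\le\tau$; the extremes $\ell=1$ and $\ell=\tau+1$ of the allowed range are trivial, both sides being $n$ and $0$ respectively).

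I see two routes to the extremal fact. The first is symmetrization in the spirit of Zykov's proof of Tur\'an's theorem: repeatedly replace the neighbourhood of the lower-degree endpoint of a non-edge of $H$ by that of the higher-degree endpoint; this operation does not decrease $|E(H)|$ and can be checked not to decrease $N(K_\ell,H)$, and iterating drives $H$ to a complete multipartite graph, among which a convexity computation shows that — subject to having at least $|E(T(n,\tau))|$ edges — the balanced $\tau$-partite one is optimal, after which one simply counts its copies of $K_\ell$. The second, more self-contained, route is induction on $\ell$ through the identity $i_\ell(G)=\frac1\ell\sum_{v\in V(G)}i_{\ell-1}(G_v)$, where $i_j(\cdot)$ is the number of independent $j$-sets and $G_v:=G-\{v\}-N_G(v)$: each $G_v$ has $n-1-d_G(v)$ vertices and at most $\frac1\tau\binom n2$ edges, so the inductive hypothesis applies to it with a rescaled parameter; summing over $v$ and applying Jensen's inequality to the convex function $x\mapsto(n-1-x)^{\ell-1}$ on the degree sequence, using that $G$ has average degree $(n-1)/\tau$, delivers the bound.

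The genuine difficulty lies in the lower-order bookkeeping rather than in the scheme. The clean bound $\binom{\tau}{\ell}(n/\tau)^\ell$ ignores the non-divisibility of $n$ by $\tau$ and the discrepancy between $\binom n2$ and $\frac{n^2}2$, so a fully rigorous argument most easily proves $i_\ell(G)\ge(1-o(1))\binom{\tau}{\ell}(n/\tau)^\ell$, which is exactly what is needed in the applications; squeezing out the exact inequality requires the precise structure of $T(n,\tau)$ together with a careful convexity estimate. In the inductive route there is the further subtlety that vertices $v$ of very large degree give terms $i_{\ell-1}(G_v)$ to which the hypothesis does not literally apply — the rescaled parameter can drop below $\ell-1$ — so those vertices have to be handled separately, either by bounding their (negligible) contribution crudely or by first reducing to graphs of bounded maximum degree. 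Since the lemma is quoted from \cite{L1993Combinatorial}, in the paper I would simply cite it; the above is how I would reconstruct a proof.
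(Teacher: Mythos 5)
The paper offers no proof of Lemma~\ref{LEM: 3}: it is imported from Lov\'asz's problem book \cite{L1993Combinatorial} exactly as you propose doing in your last sentence, so on the question of what to actually write you and the authors agree, and there is no in-paper argument to compare your sketch against.

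Judged on its own, your reconstruction has the right shape but does not --- and cannot --- establish the inequality as displayed. First, the statement with the exact constant is false: for $\ell=2$ and any $\tau\ge 2$, a graph with exactly $\frac1\tau\binom n2$ edges has exactly $\left(1-\frac1\tau\right)\binom n2=\frac{(\tau-1)n(n-1)}{2\tau}$ independent $2$-sets, which is strictly less than $\binom{\tau}{2}\left(\frac n\tau\right)^2=\frac{(\tau-1)n^2}{2\tau}$. So the ``lower-order bookkeeping'' you defer is not bookkeeping but a genuine obstruction, and only an asymptotic form of the lemma is provable. Second, your complementation step already has the error on the wrong side: $H=\overline G$ has $\left(1-\frac1\tau\right)\binom n2$ edges, which is \emph{fewer} than $|E(T(n,\tau))|=\left(1-\frac1\tau\right)\frac{n^2}2$, so $H$ does not meet the hypothesis of the extremal fact you reduce to. Third, that extremal fact --- minimizing $N(K_\ell,\cdot)$ over graphs with at least the Tur\'an number of edges --- is essentially the Lov\'asz--Simonovits clique supersaturation theorem; in the Zykov step, keeping the edge count monotone requires symmetrizing by degree while keeping $N(K_\ell,\cdot)$ monotone requires symmetrizing by clique weight, and reconciling the two is the hard part of that theorem, not something that ``can be checked.'' None of this harms the paper: Corollary~\ref{cor: 1} applies the lemma only with $\tau\to\infty$ and only needs the conclusion $(1-o(1))\binom nk$, and for that your second route (the identity $i_\ell(G)=\frac1\ell\sum_v i_{\ell-1}(G_v)$ plus convexity, after discarding the $o(n)$ vertices of degree much larger than $n/\tau$) does close. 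In short: correct instinct to cite, serviceable sketch of the asymptotic statement, but not a proof of Lemma~\ref{LEM: 3} as written, since that exact form admits no proof.
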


\begin{cor}\label{cor: 1}
	For every $K_s$-saturated graph $G$ on $n$ vertices with the minimum number of copies of $M_k$, there are $(1-o(1))\binom{n}{k}$ independent sets of size $k$ in $G$, for every $k \leq s-2$.
\end{cor}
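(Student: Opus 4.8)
The plan is to combine the edge bound of Lemma~\ref{LEM: 2} with an elementary counting argument. Since Lemma~\ref{LEM: 2} holds for \emph{every} function $f(n)\to\infty$, we are free to choose $f$ growing as slowly as we like; take for instance $f(n)=\log n$, so that the extremal graph $G$ satisfies $|E(G)|=O(n\log n)$. The key point is simply that $|E(G)|=o(n^2/\log n)$, i.e.\ $G$ is very sparse, so almost no $k$-subset of $V(G)$ can contain an edge.

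I would then argue by a direct union bound. A $k$-subset $S\subseteq V(G)$ fails to be independent precisely when $S$ contains at least one edge of $G$, and each edge of $G$ is contained in exactly $\binom{n-2}{k-2}$ distinct $k$-subsets. Hence the number of non-independent $k$-subsets is at most $|E(G)|\binom{n-2}{k-2}=O(n\log n)\cdot O(n^{k-2})=O(n^{k-1}\log n)=o(n^{k})$. Subtracting from the total $\binom{n}{k}$ gives at least $\binom{n}{k}-o(n^{k})=(1-o(1))\binom{n}{k}$ independent $k$-sets, and the trivial upper bound $\binom{n}{k}$ completes the estimate. (Alternatively one can apply Lemma~\ref{LEM: 3}: write $|E(G)|=\tfrac1\tau\binom n2$, so $\tau=\Omega(n/\log n)\to\infty$ and $k\le\tau+1$ for large $n$; after possibly adding edges to make the edge count of the form $\tfrac1\tau\binom n2$ exactly, Lemma~\ref{LEM: 3} yields at least $\binom{\tau}{k}(n/\tau)^{k}=(1-o(1))\tfrac{n^{k}}{k!}=(1-o(1))\binom nk$ independent $k$-sets, using that independent sets of a supergraph of $G$ remain independent in $G$.)

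There is essentially no obstacle here once Lemma~\ref{LEM: 2} is in hand; the only mild care needed is to note that $k$ is a fixed constant while $n\to\infty$, so that the error term $O(n^{k-1}\log n)$ (or the ratio $\binom{\tau}{k}\tau^{-k}k!\to1$ in the second approach) is genuinely negligible against $\binom nk=\Theta(n^{k})$.
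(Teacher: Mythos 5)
Your proposal is correct, and your primary argument takes a genuinely different (and more elementary) route than the paper. The paper proves the corollary by invoking Lemma~\ref{LEM: 3} (Lov\'asz's bound): from $|E(G)|=O(nf(n))$ it sets $\tau\approx n/f(n)$, $\ell=k$, and reads off at least $\binom{\tau}{k}(n/\tau)^k=(1-o(1))\binom{n}{k}$ independent $k$-sets --- essentially your parenthetical alternative, including the (unstated in the paper) technicalities you correctly flag about forcing the edge count into the form $\frac{1}{\tau}\binom{n}{2}$ and using monotonicity under adding edges. Your main argument instead counts the \emph{non}-independent $k$-sets directly: each edge lies in exactly $\binom{n-2}{k-2}$ $k$-subsets, so at most $O(n\log n)\cdot O(n^{k-2})=o(n^k)$ subsets contain an edge, and the rest are independent. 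This union bound is self-contained, avoids importing Lemma~\ref{LEM: 3} entirely, and sidesteps the edge-count normalization issue; what Lemma~\ref{LEM: 3} buys in general is a nontrivial lower bound at all densities, but in the sparse regime relevant here your elementary count gives the same $(1-o(1))\binom{n}{k}$ conclusion. Both arguments are valid; yours is arguably cleaner for this specific application.
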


\begin{proof}
	
  Let $G$ be a $K_s$-saturated graph on $n$ vertices with the minimum number of copies of $M_k$. By Lemma~\ref{LEM: 2},   we know that the number of edges of $G$ is $O(nf(n))$. Take $\tau\rightarrow\frac n{f(n)}$ and $\ell=k$. Applying Lemma~\ref{LEM: 3} and by  a simple counting, we have that there are $(1-o(1))\binom{n}{k}$ many independent sets of size $k$ in $G$.
	
\end{proof}

\section{Proof of Theorem \ref{THM: 1}}
  Let $G$ be a $K_s$-saturated graph on $n$ vertices with $N(M_k, G)=\sat(n,M_k, K_s)$. 
  By Lemma~\ref{LEM: 2}, we may assume $e(G)=O(n\log n)$.
  Let $A = \{v \in V(G) : d_G(v) \leq n^{\frac{1}{3}}\}$, then $|A| = n-o(n)$.
  
\begin{claim}\label{c3.1}
  	There are $(1-o(1))\binom{n}{k}$ independent sets of size $k$ of which all vertices are from $A$ and any two of them do not have common neighbors in $A$.
  \end{claim}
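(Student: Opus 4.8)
The plan is to bootstrap from Corollary~\ref{cor: 1}, which already supplies $(1-o(1))\binom{n}{k}$ independent $k$-sets of $G$, and then throw away the negligibly few of them that violate one of the two additional requirements. Concretely, let $\mathcal I$ be the family of all independent $k$-sets of $G$, so $|\mathcal I|\ge(1-o(1))\binom{n}{k}$. I would bound from above (a) the number of $k$-subsets of $V(G)$ that meet $V(G)\setminus A$, and (b) the number of $k$-subsets that contain two vertices with a common neighbour in $A$, and show both are $o(n^k)$; the sets of $\mathcal I$ surviving the removal of these two classes are exactly the ones the claim asks for.

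For (a): since we may assume $e(G)=O(n\log n)$, the number of vertices of degree exceeding $n^{1/3}$ is at most $2e(G)/n^{1/3}=O(n^{2/3}\log n)=o(n)$, which is the estimate $|V(G)\setminus A|=o(n)$ already noted. Each vertex outside $A$ extends to a $k$-set in at most $\binom{n-1}{k-1}=O(n^{k-1})$ ways, so the number of $k$-sets meeting $V(G)\setminus A$ is $o(n)\cdot O(n^{k-1})=o(n^k)$.

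For (b): the key point is that every $w\in A$ has $d_G(w)\le n^{1/3}$, hence $w$ is the centre of at most $\binom{n^{1/3}}{2}=O(n^{2/3})$ paths $u\,\text{--}\,w\,\text{--}\,v$. Summing over the at most $n$ choices of $w\in A$, there are $O(n^{5/3})$ pairs $\{u,v\}$ having a common neighbour in $A$, and each such pair lies in at most $\binom{n-2}{k-2}=O(n^{k-2})$ many $k$-sets; thus the number of $k$-sets containing such a pair is $O(n^{5/3})\cdot O(n^{k-2})=O(n^{k-1/3})=o(n^k)$.

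Combining, the number of independent $k$-sets all of whose vertices lie in $A$ and no two of whose vertices share a neighbour in $A$ is at least $|\mathcal I|-o(n^k)-o(n^k)=(1-o(1))\binom{n}{k}$, and it is trivially at most $\binom{n}{k}$, which proves the claim. I do not expect a real obstacle here; the only step needing a moment's care is the cherry count in (b), where it is precisely the degree bound $d_G(w)\le n^{1/3}$ on $A$ that forces the number of such pairs to be subquadratic, and hence their total contribution to be negligible compared with $\binom{n}{k}$.
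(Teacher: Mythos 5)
Your proposal is correct and follows essentially the same route as the paper: both subtract from the $(1-o(1))\binom{n}{k}$ independent $k$-sets given by Corollary~\ref{cor: 1} the $o(n^k)$ sets meeting $V\setminus A$ (using $|V\setminus A|=o(n)$ from $e(G)=O(n\log n)$) and the at most $|A|\binom{n^{1/3}}{2}n^{k-2}=o(n^k)$ sets containing a pair with a common neighbour in $A$, which is exactly your cherry count. No gap to report.
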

 
 \begin{proof}
  Let $\mathcal{I}$ denote the collection of such independent sets. First, there are $o(n^k)$ independent sets of size $k$ each containing at least one vertex from $V \backslash A$, since almost all vertices are in $A$. The number of independent sets of size $k$ in which all vertices are from $A$ and there exist two vertices with common neighbors in $A$ is at most $|A|\binom{n^{\frac{1}{3}}}{2}n^{k-2}=o(n^k)$. From Corollary \ref{cor: 1} and the definition of $\mathcal{I}$, we know that there are $(1-o(1))\binom{n}{k}$ independent sets in $\mathcal I$.
 \end{proof} 
  
  
  Consider an arbitrary independent set $I = \{v_1,\cdots, v_k\} \in \mathcal{I}$, there is a set $V_{i,j} \subseteq V \backslash A$ of $s-2$ vertices such that the induced graph $V_{i,j}\cup \{v_i, v_j\}$ is isomorphic to $K_s$ after adding the edge $v_iv_j$ for every $i, j \in \binom{[k]}{2}$.
   For every independent set $I\in\mathcal{I}$, let $N'_G(M_k, I)$ denote the number of copies of $M_k$ containing $I$ and $k$ vertices from $V \backslash A$. Then 
   $$N'_G(M_k, I)\ge (s-2)_k,$$
equality holds if and only if all $V_{i,j}$'s  are the same and {there is no common neighbor of $v_i$ and $v_j$ in $(V \backslash A) \backslash V_{ij}$.} Therefore, 
 $$N(M_k, G)\ge |\mathcal{I}|\cdot N'_G(M_k, I)\ge (1-o(1)){n\choose k}(s-2)_k.$$
Therefore, the upper bound \eqref{EQ: upper} is asymptotically tight for all $k$, moreover, we conclude that the number of independent sets $I = \{v_1,\cdots, v_k\} \in \mathcal{I}$ for which $V_{i,j}$'s are the same and there is no common neighbor of $v_i$ and $v_j$ in $(V \backslash A) \backslash V_{ij}$ is $(1-o(1))\binom{n}{k}$, since, otherwise, there will be more copies of $M_k$ in $G$ than the upper bound \eqref{EQ: upper}, which is a contradiction. Denote this collection of such independent sets by $\mathcal{J}$. Then $|\mathcal{J}|=(1-o(1))\binom{n}{k}$.
  
\begin{claim}\label{LEM: 4}
For $k \geq 3$, $G$ contains a subgraph isomorphic to $S_{n-o(n), s-2}$.
	
\end{claim}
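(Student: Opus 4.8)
The plan is to prove that the $(s-2)$-cliques $V_I$ attached to the members $I$ of $\mathcal J$ coincide with a single clique $W$ for almost all $I$, and that $(1-o(1))n$ vertices are complete to $W$; then $W$ together with those vertices already contains $S_{n-o(n),s-2}$ \emph{as a subgraph} (so independence of the second part is irrelevant). I will use the facts already in hand for $I=\{v_1,\dots,v_k\}\in\mathcal J$: the set $V_I:=V_{i,j}$ does not depend on the chosen pair, $|V_I|=s-2$, $G[V_I]$ is a clique, every vertex of $I$ is complete to $V_I$, and in fact $V_I=N(v_i)\cap N(v_j)\cap(V\setminus A)$ for any two $v_i,v_j\in I$. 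The last identity yields the crucial \emph{rigidity}: if $I,I'\in\mathcal J$ share at least two vertices, then $V_I=V_{I'}$; in particular, for any pair $\{a,b\}$ contained in some member of $\mathcal J$, all members of $\mathcal J$ through $\{a,b\}$ carry one and the same set $V_{\{a,b\}}$.

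First I would record, by double counting from $|\mathcal J|=(1-o(1))\binom nk$, that $(1-o(1))n$ vertices $v$ are \emph{rich} (lie in $(1-o(1))\binom{n-1}{k-1}$ members of $\mathcal J$), that $(1-o(1))\binom n2$ pairs are rich (lie in $(1-o(1))\binom{n-2}{k-2}$ members), and that $(1-o(1))n$ vertices are \emph{good}: rich, and incident to all but $o(n)$ rich pairs; good vertices lie in $A$. Next, fix a good vertex $v$ and consider the link hypergraph $L_v$, the $(k-1)$-uniform hypergraph whose edges are $I\setminus\{v\}$ over $I\in\mathcal J$ with $v\in I$; it has $(1-o(1))\binom{n-1}{k-1}$ edges, and by rigidity any two edges of $L_v$ that meet carry the same $V_I$. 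Here $k\ge 3$ is essential: $L_v$ is $(k-1)$-uniform with $k-1\ge2$, and an elementary superadditivity estimate shows that a $(k-1)$-uniform hypergraph on at most $n$ vertices with $(1-o(1))\binom{n-1}{k-1}$ edges has one intersection-component spanning $(1-o(1))n$ vertices, hence all but $o\!\left(\binom{n-1}{k-1}\right)$ of its edges (a component on a linear-size complement would lose a constant fraction of the edges, since $(1-\delta)^{k-1}+\delta^{k-1}\le1-\delta$ for $k-1\ge2$). Therefore there is a clique $W_v$ of size $s-2$ with $W_v\subseteq N(v)$ and $V_I=W_v$ for $(1-o(1))\binom{n-1}{k-1}$ members $I\ni v$.

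Now fix a good $v_0$ and set $W:=W_{v_0}$; I claim $W_{v'}=W$ for $(1-o(1))n$ good vertices $v'$. Take a good $v'$ forming a rich pair with $v_0$ (there are $(1-o(1))n$ such). The $(1-o(1))\binom{n-2}{k-2}=\Theta(n^{k-2})$ members of $\mathcal J$ through $\{v_0,v'\}$ become, in $L_{v_0}$, edges through the single vertex $v'$; since a vertex lying in an intersection-component on $m$ vertices is in at most $\binom{m-1}{k-2}$ edges, $v'$ cannot lie in a small ($m=o(n)$) component, so it lies in the giant one, whence all those members carry $V_I=W_{v_0}=W$; by rigidity through the rich pair, $V_{\{v_0,v'\}}=W$. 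Symmetrically, the same members become edges of $L_{v'}$ through $v_0$, $\Theta(n^{k-2})$ of them, so $v_0$ lies in the giant component of $L_{v'}$ and these members carry $V_I=W_{v'}$; comparing, $W_{v'}=W$. Consequently the set $U$ of good vertices $v'$ with $W_{v'}=W$ has size $(1-o(1))n$, lies in $A$ (hence is disjoint from $W\subseteq V\setminus A$), and every vertex of $U$ is complete to the $(s-2)$-clique $W$. Thus $G[W\cup U]$ contains $S_{|W|+|U|,\,s-2}=S_{n-o(n),\,s-2}$ as a subgraph, which is the claim.

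The main obstacle is the gluing step: naively the $o\!\left(\binom{n-1}{k-1}\right)$ exceptional members through $v_0$ could a priori absorb all $\Theta(n^{k-2})$ members through a fixed pair $\{v_0,v'\}$, so one cannot conclude $W_{v'}=W$ by counting alone. The resolution is the combination of two rigidity features: all members through a rich pair carry a \emph{single} set, and a vertex incident to $\Theta(n^{k-2})$ edges of a link cannot sit in a non-giant intersection-component. I also note that for $k=2$ the hypergraph $L_v$ is merely a set of vertices with no intersection structure, so this argument genuinely fails there — which is exactly why the $k=2$ case is excluded from the claim and treated separately.
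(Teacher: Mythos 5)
Your proof is correct, but it reaches the conclusion by a noticeably longer route than the paper. The paper's proof is a one-step averaging over pairs: since $|\mathcal J|=(1-o(1))\binom{n}{k}$, some pair $\{u,v\}$ lies in $\binom{n}{k-2}-o(n^{k-2})$ members of $\mathcal J$; by the definition of $\mathcal J$ the common $(s-2)$-set of every such member is forced to be $T=N(u)\cap N(v)\cap(V\setminus A)$, so the union of all these members is complete to the clique $T$ (and is automatically independent, $G$ being $K_s$-free), and because this union carries $\binom{n}{k-2}-o(n^{k-2})$ distinct $(k-2)$-subsets $I\setminus\{u,v\}$, it must have $n-o(n)$ vertices --- which is exactly where $k\ge 3$ (i.e.\ $k-2\ge 1$) enters. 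You instead fix a good vertex, form the $(k-1)$-uniform link hypergraph, extract a giant intersection-component to define $W_v$, and then glue the cliques $W_{v'}$ across rich pairs. This is sound: the rigidity identity $V_{i,j}=N(v_i)\cap N(v_j)\cap(V\setminus A)$ you invoke is precisely what membership in $\mathcal J$ guarantees, your superadditivity bound $(1-\delta)^{k-1}+\delta^{k-1}\le 1-\delta$ and the bound $\binom{m-1}{k-2}$ on edges through a vertex of a component on $m$ vertices are correct, and your conclusion (an $(s-2)$-clique $W$ complete to $(1-o(1))n$ vertices) gives the claimed subgraph. It even yields slightly more than needed, namely that a single clique $W$ serves almost all vertices simultaneously. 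But the giant-component machinery can be bypassed: once one heavy pair is located, its fixed common neighborhood $T$ already plays the role of your $W$, with the large second side obtained directly from the union of the members through that pair. Both arguments exploit the same rigidity and exclude $k=2$ for essentially the same counting reason --- yours via the uniformity $k-1\ge 2$ of the link, the paper's via the size $k-2\ge 1$ of $I\setminus\{u,v\}$.
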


\begin{proof}
Since $|\mathcal J| = \binom{n}{k}-o(n^k)$, there must exist two vertices $u,v$ in $A$ such that there are $\binom{n}{k-2}-o(n^{k-2})$ independent sets in $\mathcal{J}$ containing both $u$ and $v$. We denote the collection of independent sets in $\mathcal{J}$ containing both $u$ and $v$ by $\mathcal{K}$. Moreover, there is a set $T \subseteq V \backslash A$ of $s-2$ vertices such that the induced graph $T\cup \{u, v\}$ is isomorphic to $K_s$ after adding the edge $uv$. By the definition of $\mathcal{J}$, we can conclude that $T$ is the only common $(s-2)$-neighborhood of every independent set in $\mathcal{K}$. Therefore, $\cup_{I\in\mathcal{K}}I$ is an independent set since $G$ does not have a copy of $K_s$. For $k \geq 3$, since $\mathcal{K}=\binom{n}{k-2}-o(n^{k-2})$, we  can check that $|\cup_{I\in\mathcal{K}}I|=n-o(n)$ easily. (Note that it is not true for $k = 2$.) 
	
\end{proof}

Now we are ready to complete the proof of Theorem~\ref{THM: 1}. 	
 First,  let us consider the case when $k\geq3$.	
 Choose a maximum subgraph $S_{n_1, s-2}$ in $G$. By  Claim~\ref{LEM: 4}, we know that $n_1=n-o(n)$. 
 Denote the set of all vertices outside of $V(S_{n_1, s-2})$ by $M$. Let $m=|M|+s-2$. Then $m=n-n_1+s-2= o(n)$.	We may assume that $M\not=\emptyset$ for the sake of contradiction. Let $I$ be the independent set of $S_{n_1, s-2}$ and $K=V(S_{n_1,s-2})\setminus I$. Then $|I|=n_1-s+2=n-m$ and $|K|=s-2$. 
  
	\begin{claim}\label{c1}
		For any $v\in I$, $v$ has at least one neighbor in $M$.
	\end{claim}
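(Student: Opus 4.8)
The plan is to show that every vertex $v\in I$ must have a neighbor in $M$, using the $K_s$-saturation property together with the structure of $S_{n_1,s-2}$. Suppose, for contradiction, that some $v\in I$ has no neighbor in $M$. I first observe that in $G$ the neighborhood of $v$ is essentially confined to $K$: since $I$ is independent in $S_{n_1,s-2}$, $v$ has no neighbor in $I$ inside $S_{n_1,s-2}$ (it could only acquire neighbors in $I$ via edges of $G$ not present in the copy $S_{n_1,s-2}$, but any such edge would enlarge the clique-join structure or be irrelevant — more precisely, since $S_{n_1,s-2}$ was chosen \emph{maximum}, $v$ cannot be adjacent in $G$ to too many vertices of $I$ without our being able to build a larger $S_{\cdot,s-2}$; I will need to argue $N_G(v)\cap I$ is controlled). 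Combined with the assumption $N_G(v)\cap M=\emptyset$, this forces $N_G(v)\subseteq K\cup(\text{a few vertices of }I)$, so $d_G(v)$ is small — in particular $d_G(v)\le s-2+o(n)$, and in fact $v$ is adjacent to all of $K$ (that is exactly the join structure of $S_{n_1,s-2}$).

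Next I would exploit saturation. Pick any vertex $w\in M$ (which exists since $M\neq\emptyset$); then $vw\notin E(G)$, so adding $vw$ creates a $K_s$, i.e.\ there is a set $W$ of $s-2$ vertices with $G[W\cup\{v,w\}]+vw\cong K_s$. In particular $W\subseteq N_G(v)$, and $W\cup\{v\}$ already spans a $K_{s-1}$ in $G$. But $N_G(v)$ is contained in $K$ together with at most a bounded/$o(n)$ number of $I$-vertices, and no two vertices of $I$ are adjacent in the large clique-join, so the only way $N_G(v)$ contains an $(s-2)$-clique is $W=K$ (any clique among $N_G(v)$ of size $s-2$ must lie in $K$, since $I$-vertices are pairwise non-adjacent in the dominant structure and adding at most a constant many stray edges cannot manufacture a new clique of that size for large $n$). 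Hence $w$ is adjacent to every vertex of $K$ and to $v$; but then $K\cup\{w\}$ is a set of $s-1$ vertices all adjacent to all of $I$ — wait, I instead directly get that $w$ together with $K\setminus\{x\}$ for a suitable $x$, or more cleanly: $w$ is joined to all of $K$, so $K\cup\{w\}$ would let us extend the independent side by nothing but shows $w$ behaves like a $K$-vertex. The actual contradiction: $G[W\cup\{v,w\}]+vw\cong K_s$ with $W=K$ means $G$ already contains $K[\,]\cup\{v\}$ as $K_{s-1}$ and $w$ adjacent to all of $K$; then consider the copy $S_{n_1,s-2}$ — we may move $v$ out of $I$ and, since $w$ is adjacent to all of $K=W$, the graph $G$ contains $S_{n_1+1,s-2}$ on vertex set $K\cup I\cup\{w\}$ (with core $K$ and independent set $I\cup\{w\}$ after checking $w\not\sim I$; if $w\sim$ some $I$-vertex we instead directly find a $K_s$ in $G$, contradicting $K_s$-freeness). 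Either branch contradicts something: maximality of $S_{n_1,s-2}$, or $K_s$-freeness of $G$.

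So the key steps are: (1) bound $N_G(v)\cap I$ and $N_G(v)\cap M$ to pin down $N_G(v)$; (2) invoke saturation on a non-edge $vw$ with $w\in M$ to produce an $(s-2)$-clique $W\subseteq N_G(v)$; (3) argue $W=K$; (4) derive a contradiction with either the maximality of the chosen $S_{n_1,s-2}$ or with $K_s$-freeness of $G$. The main obstacle I anticipate is step (1): controlling how many vertices of $I$ the vertex $v$ can be adjacent to in $G$ (as opposed to in the fixed copy $S_{n_1,s-2}$), and more generally ruling out that a bounded number of "extra" edges incident to $v$ create an $(s-2)$-clique in $N_G(v)$ other than $K$ — this requires using that $\mathcal J$ is large (so almost all of $I$ genuinely behaves like the independent side with common neighborhood exactly $K$) and that $G$ is $K_s$-free to forbid small dense patches. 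Once $N_G(v)$ is understood, steps (2)--(4) are short.
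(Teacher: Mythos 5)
Your overall strategy is the same as the paper's: assume $v\in I$ has no neighbor in $M$, apply saturation to the non-edge $vw$ with $w\in M$ to get an $(s-2)$-clique $W\subseteq N_G(v)\cap N_G(w)$, argue $W=K$, and then contradict either $K_s$-freeness or the maximality of $S_{n_1,s-2}$. However, there is a genuine gap at the step you yourself flag as the main obstacle: you never actually establish that $N_G(v)\cap I=\emptyset$ (your step (1)), and hence never legitimately conclude $W=K$ (your step (3)). The routes you sketch to fill it do not work as stated: the bound $d_G(v)\le s-2+o(n)$ is asserted without proof, and even granted, "$o(n)$ (or boundedly many) stray edges into $I$ cannot manufacture an $(s-2)$-clique" is not an argument -- appealing to $\mathcal J$ being large or to counting is both unnecessary and insufficient here, since you need \emph{exactly} $N_G(v)\subseteq K$, not approximately.

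The missing step is a one-liner, and it is exactly what the paper uses: every vertex of $I$ is complete (in $G$) to the $(s-2)$-clique $K$, so if two vertices of $I$ were adjacent in $G$ they would span a $K_s$ together with $K$; hence $I$ is independent in $G$. Combined with your assumption $N_G(v)\cap M=\emptyset$, this gives $N_G(v)\subseteq K$, so $W\subseteq K$ and $|W|=s-2=|K|$ forces $W=K$ with no asymptotics at all. Notably, you apply precisely this $K_s$-freeness trick later (when handling the possibility $w\sim I$), so the tool is in your hands; you just failed to invoke it where it is needed. A secondary remark: once $w$ is complete to $K$, the maximality of $S_{n_1,s-2}$ is already violated, since a subgraph copy of $S_{n_1+1,s-2}$ only needs the join edges from $K$ to $I\cup\{w\}$ (independence of the second side is not required for a subgraph), though splitting into the two branches as you and the paper do is also correct.
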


\begin{proof}
Assume that there is one vertex $v\in I$ that has no neighbor in $M$. Then for any $u\in M$, there is a set $U\subseteq V \backslash I$ of $s-2$ vertices such that the subgraph induced by $V\cup \{u,v\}$ is isomorphic to $K_s$ in $G+uv$. Clearly, $U\cap (I\cup M)=\emptyset$. Therefore,  $U$ must be $K$. This fact forces $u$ to have  no  neighbor in $I$; otherwise,  we have a contradiction to  $G$ being  $K_s$-free. Thus $u$ can be added in $I$ to obtain a larger subgraph $S_{n_1+1, s-2}$, a contradiction. The claim holds.
\end{proof}

  Let $z$ be the integer such that $(z)_k = \lceil\sqrt{m + k}\cdot n^{k-\frac{1}{2}}\rceil$.
	
	\begin{claim}\label{c2}
	When $n$ is sufficiently large, for any $v\in M$, $v$ has at most $z$ neighbors in $I$.
	\end{claim}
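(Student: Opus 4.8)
The plan is to argue by contradiction and count copies of $M_k$. Suppose some fixed $v\in M$ has $|N_G(v)\cap I|\ge z+1$, and write $W:=N_G(v)\cap I$; I will show that then $N(M_k,G)>N(M_k,S_{n,s-2})$, which, since $S_{n,s-2}$ is $K_s$-saturated, contradicts $N(M_k,G)=\sat(n,M_k,K_s)$ and proves the claim. The surplus of copies comes from $v$ itself. By definition $v$ is adjacent to every vertex of $W$, and every vertex of $W\subseteq I$ is adjacent to all of $K$ in $S_{n_1,s-2}\subseteq G$, so the bipartite graph between $W$ and $\{v\}\cup K$ is complete; since $k\le s-1$, it has $\binom{s-2}{k-1}(|W|)_k$ matchings of size $k$ that contain $v$, each of them a copy of $M_k$ in $G$ through $v$. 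None of these lies in $S_{n_1,s-2}$ (they contain $v\in M$), so together with the $\binom{|I|}{k}(s-2)_k=\binom{n-m}{k}(s-2)_k$ ``canonical'' copies of $M_k$ obtained inside $S_{n_1,s-2}$ by choosing a $k$-subset of $I$ and injecting it into $K$ (possible as $|K|=s-2\ge k$), they give
\[
 N(M_k,G)\ \ge\ \binom{n-m}{k}(s-2)_k+\binom{s-2}{k-1}(|W|)_k .
\]

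Next I would compare the first term on the right with $N(M_k,S_{n,s-2})$. By Lemma~\ref{LEM: upperbound}, $N(M_k,S_{n,s-2})=(s-2)_k\binom{n}{k}+O(n^{k-1})$, and since $\binom{n}{k}-\binom{n-m}{k}\le m\binom{n}{k-1}=O(mn^{k-1})$ and $m\ge1$, this yields $N(M_k,S_{n,s-2})\le\binom{n-m}{k}(s-2)_k+Cmn^{k-1}$ for a constant $C=C(k,s)$. Subtracting this from the displayed lower bound and using that $|W|\ge z+1$ forces $(|W|)_k\ge(z+1)_k\ge(z)_k=\lceil\sqrt{m+k}\,n^{k-1/2}\rceil\ge\sqrt{m+k}\,n^{k-1/2}$, I obtain
\[
 N(M_k,G)-N(M_k,S_{n,s-2})\ \ge\ n^{k-1}\Bigl(\binom{s-2}{k-1}\sqrt{(m+k)\,n}-Cm\Bigr).
\]
Because $S_{n_1,s-2}$ was chosen maximum, $m=n-n_1+s-2=o(n)$ (see Claim~\ref{LEM: 4}), so $\binom{s-2}{k-1}\sqrt{(m+k)\,n}\ge\binom{s-2}{k-1}\sqrt{mn}>Cm$ for all sufficiently large $n$, making the right-hand side positive — the contradiction we wanted.

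The step I expect to be the main obstacle is the quantitative balance in the last displayed inequality. Both the surplus of $M_k$-copies at $v$ and the deficit in the number of canonical copies (caused by the $|M|$ vertices lying outside $S_{n_1,s-2}$) are of order $n^{k-1}$ times a factor that is only $o(n)$, so the whole argument rests on $\sqrt{(m+k)\,n}$ dominating $m$; this is precisely why the threshold $z$ is calibrated so that $(z)_k\asymp\sqrt{(m+k)\,n}\cdot n^{k-1}$, and why it is essential that $m=o(n)$. Beyond this, one only needs the routine checks that the two families of $M_k$-copies are disjoint and that $(|W|)_k$ correctly counts the matchings through $v$ (which uses $|W|\ge k$, automatic since $z\to\infty$, and $k\le s-1$, valid as $k\le s-2$).
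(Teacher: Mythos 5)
Your proof is correct and follows essentially the same route as the paper's: both arguments exhibit a surplus of roughly $(z)_k=\lceil\sqrt{m+k}\,n^{k-1/2}\rceil$ extra copies of $M_k$ through $v$ (built from $v$'s many neighbours in $I$ matched into $\{v\}\cup K$), add them to the canonical copies inside $S_{n_1,s-2}$, and conclude via the same quantitative balance that $\sqrt{(m+k)n}$ dominates the $O((m+k)n^{k-1})/n^{k-1}$ deficit because $m=o(n)$. The only cosmetic differences are that you count the extra matchings as $\binom{s-2}{k-1}(|W|)_k$ where the paper uses the cruder $\binom{z}{k}$, and that you compare directly with $N(M_k,S_{n,s-2})$ rather than with the asymptotic upper bound~\eqref{EQ: upper}.
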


\begin{proof}
  Suppose there is a vertex $v\in M$ that has more than $z = o(n)$ neighbors in $I$. 
  We know that there are at least $(s-2)_k\cdot \binom{n-m}{k}$ many copies of $M_k$ in $S_{n_1,s-2}$. 
  Now arbitrarily choose a $k$-set from $N_G(v)\cap I$ and a $(k-1)$-set from $K$, then, together with $v$, we have at least one copy of $M_k$ containing these vertices. The number of such copies of $M_k$ is at least $\binom{z}{k}$.
  Therefore, for large enough $n$, 
	\begin{align*}
		N(M_k, G) &\geq (s-2)_k\cdot \binom{n-m}{k}+\binom{z}{k}\\
		&\geq \frac{(s-2)_k}{k!}\cdot (n-m-k)^k+\frac{(z)_k}{k!} \\
		&\geq\frac{(s-2)_k}{k!}\cdot n^k-\frac{(s-2)_k}{k!}\cdot k(m+k)n^{k-1}+\frac{\sqrt{m+k}n^{k-\frac{1}{2}}}{k!}\\
		&\geq\frac{(s-2)_k}{k!}\cdot n^k+\frac{\sqrt{m+k}n^{k-\frac{1}{2}}}{2k!}\\
		&\geq\frac{(s-2)_k}{k!}\cdot n^k+\Theta(n^{k-\frac{1}{2}}),
	\end{align*}
which is a contradiction to the upper bound~\eqref{EQ: upper}.
\end{proof}
	
  Now we return to the analysis of $M$. 
By Claim~\ref{c1},  $|E_G(M, I)|\ge n-m$. By Claim~\ref{c2}, $|E_G(M, I)|\le z(m-s+2)$. If $m \leq \log n$, 
then, for large enough $n$, $z(m-s+2)<n-m$, a contradiction. 
Now we assume $m >\log n$. For a vertex $v\in M$, select a $(k-1)$-set $B$ from $I-N_G(v)$ (such a $B$ exists guaranteed by Claim~\ref{c2}). 
Denote $B=\{v_1, v_2, \ldots , v_{k-1}\}$ and let $V_i$ be a $(s-2)$-set of $V(G)$ such that  $V_i\cup \{v,v_i\}$ induces a subgraph isomorphic to $K_s$ after adding the edge $vv_i$. Because of the maximality of $S_{n_1, s-2}$, $K\nsubseteq N_G(v)$  and thus $V_i\not=K$ for any $i \in [k-1]$. 
Let $$\mathcal{M}(v,B)=\{M_k : M_k=\{vv', v_1v_1', \ldots , v_{k-1}v_{k-1}'\} \text{ with $v'\in V_1$ and $v_i'\in K$ for $i\in[k-1]$}\}.$$ 
It is easy to check that $|\mathcal{M}(v,B)|\ge (s-2)_k + 1$. 
Clearly, $\mathcal{M}(v,B)\cap \mathcal{M}(v,B')=\emptyset$ if $B$ and $B'$ are different $(k-1)$-sets of $I-N_G(v)$,  and $\mathcal{M}(v,B)\cap \mathcal{M}(u,C)=\emptyset$ for $u\not=v$ in $M$, where $B$ and $C$ are $(k-1)$-sets of $I-N_G(v)$ and $I-N_G(u)$, respectively.
Thus,  the total number of such copies of $M_k$ is 
$$\sum_{v\in M, B\subseteq I-N_G(v)\atop |B|=k-1}|\mathcal{M}(v,B)|\ge\binom{n-m-z}{k-1}(m -s + 2) ((s-2)_k+ 1).$$ 
Therefore, 
	\begin{align*}
	N(M_k, G) &\geq (s-2)_k\cdot \binom{n-m}{k}+\binom{n-m-z}{k-1}(m-s + 2)((s-2)_k + 1)\\
		&\geq \frac{(s-2)_k}{k!}\cdot (n-m-k)^k+\binom{n-m-z}{k-1}(m -s)((s-2)_k + 1) \\
		&\geq\frac{(s-2)_k}{k!} n^k-\frac{(s-2)_k}{(k-1)!}\cdot (m+k)n^{k-1}+\frac{(s-2)_k}{(k-1)!}(m-s)(n-m-z-k)_{k-1}\\
		&\quad+\binom{n-m-z}{k-1}(m-s)\\
		&\geq\frac{(s-2)_k}{k!} n^k-\frac{(s-2)_k}{(k-1)!}\cdot mn^{k-1}-\Theta(n^{k-1})+\frac{(s-2)_k}{(k-1)!}\cdot mn^{k-1}+\Theta(mn^{k-1})\\
		&\quad-\Theta(m^2n^{k-2})-\Theta(mzn^{k-2})\\
		&\geq\frac{(s-2)_k}{k!} n^k+\Theta(mn^{k-1}),
	\end{align*}
the last inequality holds when $n$ is sufficiently large, but this is a contradiction to the upper bound~\eqref{EQ: upper} (since we assume $m>\log n$ in this case).   Therefore, $M$ has to be an empty set, and $G\cong S_{n,s-2}$. This completes the proof of the case $k \geq 3$. 

Next we assume $k=2$ and divide the proof into two cases. 

\noindent{\bf Case 1:} $s=3$.

Then, by~\eqref{EQ: k>s-2},  $\sat(n,M_2,K_3)=0$ and $S_{n,1}$ is an extremal graph.
We claim that $S_{n,1}$ is the unique extremal graph. 
Since $G$ is $K_3$-saturated and contains the minimum number of copies of $M_2$, $G$ is connected and contains no cycle, which implies that $G$ is a tree. 
Furthermore, adding any missing edge to $G$ creates a copy of $K_3$ forces that $G\cong S_{n,1}$. 
Therefore, $S_{n,1}$ is the unique extremal graph when $k=2$ and $s=3$.  

\noindent{\bf Case 2:} $s \geq 4$.

Since $G$ is $K_s$-saturated, $\delta(G)\ge s-2$. Suppose $m=|E(G)|$.  
Then $$N(M_2, G)=\frac{1}{2}\sum_{uv \in E(G)}\left(|E(G)|-d_G(v)-d_G(u)+1\right)=\frac{1}{2}\left(m^2+m-\sum_{v \in V(G)}d_G^2(v)\right).$$
If there are two vertices $u, v\in V(G)$ such that $s-2<d_G(u)\le d_G(v)<n-1$.
We construct a new graph $G^*$ with $d_{G^*}(v)=d_G(v)+1$, $d_{G^*}(u)=d_G(u)-1$, and $d_{G^*}(x)=d_G(x)$ for $x\in V(G) \backslash \{u,v\}$ (this can be easily done just by deleting a neighbor  of $v$ and adding it to the neighborhood of $v$).  Then 
	\begin{align*}
	N(M_2, G)-N(M_2, G^*)&=\frac{1}{2}(d_{G^*}^2(v)+d_{G^*}^2(u)-d_G^2(v)-d_G^2(u))\\
	                     &=\frac{1}{2}((d_G(v)+1)^2+(d_G(u)-1)^2-d_G^2(v)-d_G^2(u))\\
	&=d_G(v)-d_G(u)+1>0, 
	\end{align*}
Continue  the above process until there is at most one vertex $v$ with  $d_G(v)\notin\{s-2,n-1\}$. For convenience, we still use $G$ to denote the  graph at the final step.

\noindent{\bf Subcase 2.1:}  $d_G(v)\in \{s-2,n-1\}$ for every $v\in V(G)$.

Assume there are $a$ vertices with degree $s-2$ and $b$  vertices with degree $n-1$. Then  we have the following equation array,
	\[
	\begin{cases}
		2m=a(s-2)+b(n-1)\\
	    n=a+b	
	\end{cases}.\]  
Solve the equation array, we have $a=\frac{n^2-n-2m}{n-s+1}$, $b=\frac{2m+2n-ns}{n-s+1}$. 
Therefore,
	\begin{align}\label{EQ: no.M_2}
		N(M_2, G)&=\frac{1}{2}(m^2+m-a(s-2)^2-b(n-1)^2)\nonumber\\
		&=\frac{1}{2}\left(m^2+(7-2n-2s)m+n(n-1)(s-2)\right)\\
		&\ge N(M_2, S_{n,s-2}),\label{EQ: eq}
	\end{align}
the last inequality holds because the quadratic function (\ref{EQ: no.M_2}) increases in the interval $[n+s-\frac{7}{2}, +\infty)$ 
and $m=|E(G)|\ge |E(S_{n,s-2})|=(s-2)(n-s+2)+\binom{s-2}{2}\geq n+s-\frac{7}{2}$  by Theorem~\ref{THM: Erdos64}, moreover,  equality holds in (\ref{EQ: eq}) if and only if $G\cong S_{n,s-2}$. 
	
\noindent{\bf Subcase 2.2:} There exists one vertex $v$ with $s-2<d_G(v)=c<n-1$.

Assume there are $a$ vertices with degree $s-2$ and $b$  vertices with degree $n-1$, then  
	\[
	\begin{cases}
		2m=a(s-2)+b(n-1)+c\\
		n=a+b+1.	
	\end{cases}\] 
Solve the equation array, we have $a=\frac{(n-1)^2-2m+c}{n-s+1}$ and $b=\frac{2m-c-(n-1)(s-2)}{n-s+1}$.
	\begin{align}
	N(M_2, G)&=\frac{1}{2}(m^2+m-a(s-2)^2-b(n-1)^2-c^2)\nonumber\\
		&=\frac{1}{2}\left(-c^2+(n+s-3)c+m^2+(7-2n-2s)m+(n-1)^2(s-2)\right)\nonumber\\
		&>\frac{1}{2}\left(m^2+(7-2n-2s)m+n(n-1)(s-2)\right)\label{EQ: >}\\ 
		&\ge N(M_2, S_{n,s-2})\nonumber
	\end{align}
where the inequality is tight in (\ref{EQ: >}) because 	$\min\limits_{s-2<c<n-1}\{-c^2+(n+s-3)c\}>(n-1)(s-2)$.
The proof of Theorem~\ref{THM: 1} is completed.     \qed

\section*{Author contributions}
{\bf Jiejing Feng:} Conceptualization, Methodology, Writing. {\bf Doudou Hei:} Conceptualization, Methodology, Writing. {\bf Xinmin Hou:} Supervision.

\section*{Statements and Declarations}
No conflict of interest exits in the submission of this manuscript, and manuscript is approved by all authors for publication. I would like to declare on behalf of my co-authors that the work described was original research that has not been published previously, and not under consideration for publication elsewhere, in whole or in part. All the authors listed have approved the manuscript that is enclosed.


\begin{thebibliography}{10}
	\bibitem{2016Many}
N.~Alon and C.~Shikhelman.
\newblock Many $T$ copies in $H$-free graphs.
\newblock {\em Journal of Combinatorial Theory, Series B}, 121:146--172,
2016.

\bibitem{CHAKRABORTI2020103185}
D.~Chakraborti and P.-S.~Loh.
\newblock Minimizing the numbers of cliques and cycles of fixed size in an $F$-saturated graph.
\newblock {\em European Journal of Combinatorics}, 90:103185, 2020.

\bibitem{P1964A}
P.~Erd\H{o}s, A.~Hajnal and J.~W. Moon.
\newblock A problem in graph theory.
\newblock {\em Amer. Math. Monthly}, 71:1107--1110, 1964.

\bibitem{2021Minimizing}
B.~Ergemlidze, A.~Methuku, M.~Tait, and C.~Timmons.
\newblock Minimizing the number of complete bipartite graphs in a
$K_s$-saturated graph.
\newblock {\em Discussiones Mathematicae Graph Theory},  in press, https://doi.org/10.7151/dmgt.2402.


\bibitem{2011A}
	J.~R. Faudree, R.~J. Faudree, and J.~R. Schmitt.
	\newblock A survey of minimum saturated graphs.
	\newblock {\em The electronic journal of combinatorics}, 2011, \#DS19.
	
\bibitem{Z2013The}
Z.~F\"{u}redi and M.~Simonovits.
\newblock The history of degenerate (bipartite) extremal graph problems.
\newblock {\em Springer Berlin Heidelberg}, 2013.

\bibitem{G21DMGT}
D. Gerbner,  Generalized Tur\'an problems for small graphs. Discussiones Mathematicae Graph Theory, 2021, 10.7151/dmgt.2388. 

\bibitem{G21DM} D. Gerbner,  On Turán-good graphs. Discrete Mathematics. 344 (2021), 112445. 10.1016/j.disc.2021.112445. 

\bibitem{G21arX} D. Gerbner, A non-aligning variant of generalized Tur\'an problems, arXiv:2109.02181v1, 2021. 

\bibitem{G22arX2} D. Gerbner, Generalized Tur\'an problems for double stars, arXiv:2112.11144v2, 2022.

\bibitem{GGMV20}D. Gerbner, E. Gy{\H o}ri, A. Methuku and M. Vizer, Generalized Tur\'an problems for even cycles, Journal of Combinatorial Theroy,  Series B. 145 (2020), 169-213.

\bibitem{GP19} D. Gerbner, C. Palmer, Counting copies of a fixed subgraph in $F$-free graphs, European Journal of Combinatorics, 82(2019), pp 103001  DOI: 10.1016/J.EJC.2019.103001

\bibitem{GP20}  D. Gerbner, C. Palmer, Some exact results for generalized Tur\'an problems, arXiv: 2006.03756v1 (2020).


\bibitem{Hei21} D. Hei, X. Hou, B. Liu, Some exact results of the generalized Tur\'an
numbers for paths, arXiv preprint arXiv:2112.14895, 2021.

	\bibitem{2020Few}
J.~Kritschgau, A.~Methuku, M.~Tait, and C.~Timmons.
\newblock Few $H$ copies in $F$‐saturated graphs.
\newblock {\em Journal of Graph Theory}, 94(3):320--348, 2020.

\bibitem{Lan21}
Y. Lan, Y. Shi, Y. Wang, J. Zhang, The saturation number of $C_6$,  arXiv:2108.03910v2.

	\bibitem{L1993Combinatorial}
L.~Lov\'{a}sz.
\newblock Combinatorial Problems and Exercises,
\newblock {\em American Mathematical Soc}.
2007.

\bibitem{Ma21}
Y. Ma, X. Hou, D. Hei, and J. Gao. Minimizing the number of edges in C≥r-saturated graphs. Discrete
Math., 344(11):Paper No. 112565, 14, 2021.
	

	

	
	\bibitem{doi:10.1137/19M1239052}
	L. Shoham.
	\newblock Many $H$-Copies in Graphs with a Forbidden Tree,
	\newblock {\em SIAM Journal on Discrete Mathematics}, 33(4):2360-2368, 2019.
	

	


\bibitem{1995What}
A.~Sidorenko.
\newblock What we know and what we do not know about Tur\'{a}n numbers.
\newblock Graphs \& Combinatorics, 11(2):179--199, 1995.
	
	
	
	
	
	
	
	
	
\end{thebibliography}
\end{document}